\documentclass[12pt]{article}

\usepackage{amsmath}
\usepackage{amsthm}
\usepackage{amssymb}
\usepackage{graphicx}
\usepackage{standalone}
\usepackage{comment}
\usepackage{color}
\usepackage{enumerate}

\usepackage{xcolor}
\usepackage{cancel}



\makeatletter
\usepackage{comment}
\let\wfs@comment@comment\comment
\let\comment\@undefined

\usepackage{changes}
\let\wfs@changes@comment\comment
\let\comment\@undefined

\newcommand\comment{%
    \ifthenelse{\equal{\@currenvir}{comment}}
    {\wfs@comment@comment}
    {\wfs@changes@comment}%
}

\definechangesauthor[name=Corrado, color=red]{COR}
\definechangesauthor[name=Valentino, color=blue]{VALE}\usepackage{lineno}
\definechangesauthor[name=Ferdinando, color=green]{FERD}\usepackage{lineno}
\usepackage{amssymb}

\usepackage[all,cmtip]{xy}

\usepackage{mathrsfs}

\usepackage{tabularx}
\usepackage{booktabs}
\usepackage[labelfont=bf,format=plain,justification=raggedright,singlelinecheck=false]{caption}

\theoremstyle{plain}
\newtheorem{thm}{Theorem}[section]

\newtheorem{lem}[thm]{Lemma}
\newtheorem{prop}[thm]{Proposition}

\newtheorem{cor}[thm]{Corollary}
\newtheorem{rem}[thm]{Remark}

\author{Valentino Smaldore, Corrado Zanella and Ferdinando Zullo}
\title{New scattered linearized quadrinomials}
\date{\today}

\newcommand{\F}{\mathbb F}
\newcommand{\Fq}{{{\mathbb F}_q}}
\newcommand{\fq}{{{\mathbb F}_q}}

\newcommand{\Fqn}{{{\mathbb F}_{q^n}}}
\newcommand{\fqn}{{{\mathbb F}_{q^n}}}
\newcommand{\Fqt}{{{\mathbb F}_{q^t}}}
\newcommand{\Fqts}{{{\mathbb F}_{q^t}^*}}

\newcommand{\la}{\langle}
\newcommand{\ra}{\rangle}

\newcommand{\NN}{\mathbb N}
\newcommand{\Rt}{\operatorname{R-}q^t\operatorname-}
\newcommand{\Lt}{\operatorname{L-}q^t\operatorname-}
\newcommand{\pol}{\varphi_{m,q}}
\newcommand{\poll}{\varphi_{m,\sigma}}
\newcommand{\polll}{\varphi_{m,{q^J}}}
\newcommand{\trea}{(iii\text{-}a)}
\newcommand{\treb}{(iii\text{-}b)}
\newcommand{\sd}[1]{{\sigma^{#1}}}

\DeclareMathOperator{\PG}{PG}
\DeclareMathOperator{\N}{N}
\DeclareMathOperator{\Tr}{Tr}
\DeclareMathOperator{\GaL}{\Gamma L}
\DeclareMathOperator{\GL}{GL}

\DeclareMathOperator{\C}{\mathcal{C}}
\DeclareMathOperator{\Gal}{Gal}

\begin{document}

\maketitle

\begin{abstract}
Let $1<t<n$ be integers, where $t$ is a divisor of $n$.
An $\Rt$partially scattered polynomial is a $q$-polynomial $f$ in $\F_{q^n}[X]$ that satisfies the condition that for all $x,y\in\F_{q^n}^*$ such that $x/y\in\F_{q^t}$, if $f(x)/x=f(y)/y$, then $x/y\in\Fq$; $f$ is called scattered if this implication holds for all $x,y\in\F_{q^n}^*$.
Two polynomials in $\F_{q^n}[X]$ are said to be equivalent if their graphs are in the same orbit under the action of the group $\GaL(2,q^n)$.
For $n>8$ only three families of scattered polynomials in $\F_{q^n}[X]$ are known: $(i)$~monomials of pseudoregulus type, $(ii)$~binomials of Lunardon-Polverino type, and $(iii)$~a family of quadrinomials defined in \cite{BZZ,LZ} and extended in \cite{LMTZ,NSZ}.
In this paper we prove that the polynomial $\polll=X^{q^{J(t-1)}}+X^{q^{J(2t-1)}}+m(X^{q^J}-X^{q^{J(t+1)}})\in\F_{q^{2t}}[X]$, $q$ odd, $t\ge3$ is $\Rt$partially scattered for every value of $m\in\F_{q^t}^*$ and $J$ coprime with $2t$.
Moreover, for every $t>4$ and $q>5$ there exist values of $m$ for which $\pol$ is scattered and new with respect to the polynomials mentioned in $(i)$, $(ii)$ and $(iii)$ above.
The related linear sets are of $\GaL$-class at least two.
\end{abstract}

\noindent\textbf{2020 MSC:}{ 11T71; 11T06; 94B05} \\
\textbf{Keywords:}{ Scattered polynomials; linearized polynomials; MRD codes}

\section{Introduction}

In this paper $q=p^{\varepsilon}$ denotes a power of a prime $p$.
Let $n$ be a positive integer.
A \emph{$q$-polynomial}, or \emph{$\fq$-linearized polynomial}, in $\fqn[X]$ is of type
\begin{equation}\label{e:qpoly}
  f=\sum_{i=0}^d a_iX^{q^i},\quad a_i\in\Fqn,\ i=0,1,\ldots,d.
\end{equation}
If $a_d\neq0$, then the $q$-polynomial $f$ has \emph{$q$-degree} $d$.
The set of $q$-polynomials over $\fqn$ is denoted by $L_{n,q}$.
Such a set, equipped with the operations of sum, multiplication by elements of $\fq$, and the composition, results to be an $\fq$-algebra.
The quotient algebra $\mathcal{L}_{n,q}=L_{n,q}/(X^{q^n}-X)$ is isomorphic to the $\fq$-algebra of the $\fq$-linear endomorphisms of $\fqn$.
Hence, for every $\fq$-linear endomorphism $\Phi$ of $\fqn$ there exists a unique $q$-polynomial $f$ of $q$-degree less than $n$ such that $f(x)=\Phi(x)$ for any $x\in\fqn$.
By abuse of notation, any $f\in L_{n,q}$ is identified with the class $f+(X^{q^n}-X)\in\mathcal L_{n,q}$.

$q$-polynomials have found many applications in various areas of combinatorics, coding theory and cryptography.
Special attention has been paid to \emph{scattered polynomials}, introduced by Sheekey in \cite{Sheekey}, in the context of optimal codes in the rank metric, as we will see later.
A \emph{scattered polynomial} is an  $f\in\mathcal{L}_{n,q}$ such that for any $x,y\in\mathbb F_{q^n}^*$,
\begin{equation}\label{eq:condscatt}
\frac{f(x)}{x}=\frac{f(y)}{y}\ \Longrightarrow\ \frac{x}{y}\in\mathbb{F}_{q}.
\end{equation}

A generalization of the notion of scattered polynomials was recently introduced in \cite{LZ2}.
Let $f$ be a $q$-polynomial in $\mathcal{L}_{n,q}$,
and $t$ a nontrivial divisor of $n$; so, $n=tt'$, and $1<t,t'<n$.
We say that $f$ is \emph{$\Lt$partially scattered} if for any $x,y\in\mathbb F_{q^n}^*$,
\begin{equation}\label{eq:condL}
\frac{f(x)}{x}=\frac{f(y)}{y}\ \Longrightarrow\ \frac{x}{y}\in\mathbb{F}_{q^t},
\end{equation}
and that $f$ is \emph{$\Rt$partially scattered} if for any  $x,y\in\mathbb F_{q^n}^*$,
\begin{equation}\label{eq:condR}
\frac{f(x)}{x}=\frac{f(y)}{y}\,\mbox{ and }\, \frac{x}{y}\in\mathbb{F}_{q^t}\ \Longrightarrow\  \frac{x}{y}\in\fq.
\end{equation}
A  $q$-polynomial is scattered if and only if it is both $\Lt$ and $\Rt$partially scattered.

The \emph{graph} of $f\in\fqn[X]$ is $U_f=\{(x,f(x))\colon x\in\fqn\}$.
If $f\in\mathcal L_{n,q}$, then $U_f$ is an $\fq$-subspace of $\F_{q^n}^2$.
We say that two polynomials $f$ and $g$ in $\mathcal{L}_{n,q}$ are \emph{$\GaL$-equivalent}, or simply \emph{equivalent}, if their graphs are in the same orbit under the action of the group $\GaL(2,q^n)$.
Up to such a notion of equivalence,
 only three families of scattered polynomials in $\mathcal{L}_{n,q}$ are known for $n>8$:
\begin{itemize}
\item[$(i)$] $X^{q^s}$, with $\gcd(s,n)=1$, known as \emph{pseudoregulus type};
\item[$(ii)$] $X^{q^{n-s}}+\delta X^{q^s}$, with $\gcd(s,n)=1$ and $\N_{q^n/q}(\delta)=\delta^{\frac{q^n-1}{q-1}}\ne 0,1$, known as \emph{Lunardon-Polverino} type \cite{LP,Sheekey};
\item[$(iii)$] $\psi_{s,h}=X^{q^s}+X^{q^{s(t-1)}}+h^{1+q^{s}}X^{q^{s(t+1)}}+h^{1-q^{s(2t-1)}}X^{q^{s(2t-1)}} \in \mathcal{L}_{2t,q}$, where $q$ is odd, $\gcd(s,2t)=1$, $h \in \F_{q^{2t}}$ and $h^{q^t+1}=-1$ \cite{BZZ,LMTZ,LZ,NSZ}.
\end{itemize}
It is convenient to split the family $(iii)$ into two separate families $\trea$ of all polynomials $\psi_{s,h}$ such that $h\in\Fqt$, and $\treb$, where $h\notin\Fqt$.

One of the reasons that scattered polynomials and their generalizations have attracted so much attention is  their connection to optimal codes in the theory of rank-metric codes.
See \cite{PolZul,Sheekey} for a general overview of this topic.
Rank-metric codes are sets of $n \times m$ matrices over a finite field $\F_q$, endowed with the rank metric. When $n=m$, the rank-metric codes can be represented in terms of $q$-polynomials, since the $\fq$-algebra $\F_q^{n\times n}$ is isomorphic to $\mathcal L_{n,q}$.
Of particular interest is the family of \emph{maximum rank distance (MRD)} codes, which are of maximum size for  given $n$  and given minimum rank distance.
The first construction of a family of MRD codes was due to Delsarte \cite{Delsarte} and independently to Gabidulin \cite{Gabidulin}. The codes of this family are now known as the \emph{Gabidulin codes}.
Sheekey in \cite{Sheekey} pointed out a way to construct special classes of MRD codes: if $f\in\mathcal L_{n,q}$ is a scattered polynomial then $\C_f=\la X, f \ra_{\fqn}$ is an MRD code of size $q^{2n}$ and minimum distance $n-1$.
In the same paper, he also proved that the equivalence of $q$-polynomials corresponds to the equivalence of rank-metric codes. Therefore, the scattered polynomials $(i)$, $(ii)$ and $(iii)$ give rise to three disjoint families of MRD codes.

The \emph{adjoint} of a $q$-polynomial $f=\sum_{i=0}^{n-1}a_iX^{\sigma^i}$ with respect to the trace form is the polynomial $f^\top$ which satisfies $\Tr_{q^n/q}(f(x)y)=\Tr_{q^n/q}(xf^\top(y))$ for every $x,y\in\Fqn$
(\footnote{$\Tr_{q^n/q}(x)=x+x^q+\cdots+x^{q^{n-1}}$ for $q\in\Fqn$.}).
It holds
\[
 f^\top=\sum_{i=0}^{n-1}a_i^{\sigma^{n-i}}X^{\sigma^{n-i}}.
\]
Such $f^\top$ is scattered if and only if $f$ is. Even, $f$ and $f^\top$ determine the same linear set $L_f=L_{f^\top}$ \cite{CsMaPo}.
However, the polynomials $f$ and $f^\top$ need not to be equivalent.
The classes $(i)$, $(ii)$, and $\treb$ described above contain their own adjoints up to equivalence.
Furthermore, every polynomial in the collection $\treb$ is equivalent to its adjoint \cite[Theorem~4.6]{LMTZ}, \cite[Proposition~4.17]{NSZ}.

The scattered polynomials can also be used to construct scattered linear sets.
We refer the reader to \cite{Lavrauw,Polverino,PolZul} for generalities on this topic.
Any linear set of rank $n$ in the projective line $\PG(1,q^n)$ can be defined, up to the action of GL$(2,q^n)$, by a $q$-polynomial $f$ as follows
\[ L_f=\{ \la (x,f(x)) \ra_{\fqn} \colon x \in \F_{q^n}^* \}, \]
and we say that it is \emph{scattered} if $|L_f|=(q^n-1)/(q-1)$.
It is a straightforward check that $f$ is scattered if and only if $L_f$ is scattered.
Scattered linear sets on the projective line have been related to various objects in Galois geometries, such as linear minimal blocking sets of largest order, translation planes and many more; see for example \cite{Lavrauw,LZ,Polverino}.

In this paper, we study the $q$-polynomials
\[ \poll=X^{{\sigma}^{t-1}}+X^{{\sigma}^{2t-1}}+m(X^{\sigma}-X^{{\sigma}^{t+1}})\in \mathcal{L}_{2t,q}, \]
where $t\ge3$, $m\in\F_{q^t}^*$, $x\mapsto x^\sigma$ is a generator of $\Gal(\fqn/\Fq)$, i.e. $\sigma={q^J}$, $J\in\{1,2,\ldots,2t-1\}$, $\gcd(J,2t)=1$, and $q$ is odd.
In Section \ref{sec:polisscatt}, we will first prove that any such $\poll$ is R-$q^t$-partially scattered, and then we will show some conditions on $m$ that ensure that $\poll$ is scattered.
In Section \ref{sec:stabil}, we find the stabilizer of the graph of $\poll$ under the action of the group $\mathrm{GL}(2,q^n)$, which turns out to be an invariant for equivalence.
In the last section we consider the question of equivalence between $\pol$ and the known families of scattered polynomials.
The adjoint $\varphi_{m,q}^\top$ of a scattered polynomial $\pol$ is not equivalent to $\varphi_{k,q}$ for any $k\in\Fqt$; hence, $\pol$ does not belong to the family $\treb$ (Proposition~\ref{p:class}).
The main result of this paper is Theorem~\ref{t:main}. It states that for $t>4$, if $t$ is even and $q>3$ or $t$ is odd and $q>5$, there exists at least one scattered polynomial of type $\pol$ that is not equivalent to any known scattered polynomial.

In \cite{CsMaPo} the notion of \emph{$\GaL$-class} of a linear set $L$ has been introduced, which is the number of nonequivalent polynomials $f$ such that $L_f=L$.
As a consequence of Proposition~\ref{p:class}, the $\GaL$-class of any scattered linear set of type $L_{\pol}$ is at least two.

\section{A family of $\Rt$partially scattered polynomials}\label{sec:polisscatt}
From now on we will assume that $q$ is an odd prime power, $t\ge3$ is an integer, and $n=2t$.
We will show a family of $\Rt$partially scattered polynomials
in $\mathcal{L}_{n,q}$.

For $m\in\fqn$ and $\sigma=q^J$, $J\in\{1,\ldots,n-1\}$, $\gcd(J,n)=1$,
consider the following $q$-polynomials in $\mathcal{L}_{n,q}$:
\[ \alpha=\alpha_\sigma=X^{{\sigma}^{t-1}}+X^{{\sigma}^{2t-1}}\,\,\,\text{and}\,\,\,\beta=\beta_{m,\sigma}=m(X^{\sigma}-X^{{\sigma}^{t+1}}). \]
Define
\[ W=\{ x \in \mathbb{F}_{q^{n}} \colon x^{q^t}+x=0 \}, \]
which is a one-dimensional $\F_{q^t}$-subspace of $\F_{q^{n}}$.
Note that $x^{-1},x^q,x^\sigma\in W$ for any $x\in W$, $x\neq0$.
Some results in the sequel, such as the next ones, are based on the fact that the kernel of a non-trivial $\Fq$-linear map of type $\Phi(x)=\sum_{i=0}^d a_ix^{\sigma^i}$ has dimension at most $d$ over $\Fq$ (see e.g. \cite[Theorem 5]{GQ2009}).

\begin{lem}\label{lem:techn}
The following hold:
\begin{enumerate}
    \item if $A \in \F_{q^t}$ and $B\in W$ then $AB \in W$;
    \item if $A , B\in W$ then $AB \in \F_{q^t}$;
    \item $\F_{q^{2t}}=\F_{q^t}\oplus W$;
    \item $\ker(\alpha)=W$;
    \item $\mathrm{Im}(\alpha)=\F_{q^t}$;
    \item $\ker(\beta)=\F_{q^t}$;
    \item $\mathrm{Im}(\beta)=W$
\end{enumerate}
\end{lem}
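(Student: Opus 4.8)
The plan is to treat parts (1)--(3) as direct computations with the Frobenius map $x\mapsto x^{q^t}$, and to reduce parts (4)--(7) to that same map after a single change of variable. The observation that drives everything is that $\gcd(J,2t)=1$ forces $J$ to be odd (otherwise $2\mid\gcd(J,2t)$), whence $Jt\equiv t\pmod{2t}$ and therefore $\sigma^t=q^t$ as an operator on $\Fqn$. Consequently $\Fqt=\{x:x^{q^t}=x\}$ is exactly the fixed field of $\sigma^t$, while $W=\{x:x^{q^t}=-x\}$ is its $(-1)$-eigenspace; both are one-dimensional $\Fqt$-subspaces, and $\sigma$, being a power of the Frobenius, permutes each of them.

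For (1) and (2) I would apply $x\mapsto x^{q^t}$ to the product and use multiplicativity: if $A\in\Fqt$ and $B\in W$ then $(AB)^{q^t}=A(-B)=-AB$, so $AB\in W$; if $A,B\in W$ then $(AB)^{q^t}=(-A)(-B)=AB$, so $AB\in\Fqt$. For (3), since $q$ is odd I can write $x=\tfrac12(x+x^{q^t})+\tfrac12(x-x^{q^t})$, with the first summand in $\Fqt$ and the second in $W$; the sum is direct because any element of $\Fqt\cap W$ satisfies $2x=0$, hence $x=0$.

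The heart of the argument is the change of variable for $\alpha$ and $\beta$. Writing $y=x^{\sigma^{t-1}}$ and using $\sigma^{2t-1}=\sigma^{t-1}\sigma^{t}$ gives $\alpha(x)=y+y^{q^t}$, the relative trace $\Fqn\to\Fqt$; writing $z=x^{\sigma}$ and using $\sigma^{t+1}=\sigma\,\sigma^{t}$ gives $\beta(x)=m(z-z^{q^t})$. From the eigenspace description, $\alpha(x)=0$ iff $y\in W$ iff $x\in W$ (as $\sigma^{t-1}$ stabilizes $W$), which is (4); dually $\beta(x)=0$ iff $z\in\Fqt$ iff $x\in\Fqt$ (as $\sigma$ stabilizes $\Fqt$ and $m\ne0$), which is (6). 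Parts (5) and (7) then follow by rank--nullity: one checks $\mathrm{Im}(\alpha)\subseteq\Fqt$ (a trace is fixed by $q^t$) and $\mathrm{Im}(\beta)\subseteq W$ (here I use $m\in\Fqt$ together with (1), so that $m\cdot W\subseteq W$), and since $\dim_{\fq}W=\dim_{\fq}\Fqt=t$ while $\dim_{\fq}\ker\alpha=\dim_{\fq}\ker\beta=t$, these containments are forced to be equalities.

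The only genuine subtlety to watch is the identity $\sigma^t=q^t$: without the parity of $J$ the quadrinomials $\alpha$ and $\beta$ would not collapse to the trace and ``anti-trace'' forms, and the whole reduction would break down. A secondary point is that (7), and the equality $\ker\beta=\Fqt$, require $m\neq0$ and, more precisely, $m\in\Fqt$, so that multiplication by $m$ preserves $W$ rather than interchanging $W$ and $\Fqt$; this is exactly where the standing hypothesis $m\in\Fqts$ is used.
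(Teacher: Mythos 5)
Your proof is correct, and it turns on the same key fact as the paper's: since $J$ is odd, $\sigma^t=q^{Jt}$ acts on $\fqn$ exactly as $x\mapsto x^{q^t}$, so that $\Fqt$ and $W$ are the $(+1)$- and $(-1)$-eigenspaces of that map and $\alpha,\beta$ collapse (after the change of variable $y=x^{\sigma^{t-1}}$, resp.\ $z=x^\sigma$) to the trace and ``anti-trace'' over $\Fqt$. The one methodological difference is in item 4: the paper raises $\alpha$ to the $\sigma$, proves only the inclusion $W\subseteq\ker(\alpha)$ via the odd-iterate computation $x^{\sigma^t}=-x$, and then concludes by the kernel-dimension bound $\dim_{\fq}\ker(\alpha)\le t$ quoted from \cite{GQ2009}; you instead observe $Jt\equiv t\pmod{2t}$ and read off $\ker(\alpha)=W$ as an exact eigenspace identification, with no appeal to that bound, and likewise get items 5 and 7 from rank--nullity (which the paper leaves implicit, proving only 4 and 5 and omitting the remaining items as routine). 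Your version is marginally more self-contained; the paper's is the standard reflex when the equality is not immediate. Both are sound, and your handling of the roles of $m\in\Fqts$ (needed for $\ker(\beta)=\Fqt$ and for $m\cdot W\subseteq W$ in item 7) matches the hypotheses of the lemma.
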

\begin{proof}
We only prove 4. and 5.
Raising to the $\sigma$ one obtains
\[
  \ker(\alpha)=\{x^{\sigma^t}+x=0\}.
\]
Furthermore, if $x\in W$, then $x^{\sigma^t}=(((x^{q^t})^{q^t})\cdots)^{q^t}$, an odd number of powers, hence $x^{\sigma^t}=-x$, that is $x\in\ker(\alpha)$.
This implies $W\subseteq\ker(\alpha)$, and 4. follows from $\dim_{\fq}\ker(\alpha)\le t$.

Next, noting that $\Fqt=\{y\in\fqn\colon y^{\sigma^t}-y=0\}$; if $y=\alpha(x)$, then
\[ y^{\sigma^t}-y=(x^{{\sigma}^{t-1}}+x^{{\sigma}^{2t-1}})^{\sigma^t}-
(x^{{\sigma}^{t-1}}+x^{{\sigma}^{2t-1}})=0. \]
\end{proof}

\begin{thm}\label{thm:Rscattpoly}
Let $t\ge3$ be an integer.
Assume $m\in\mathbb F_{q^t}^*$ and $\sigma=q^J$, $J\in\{1,\ldots,2t-1\}$, $\gcd(J,2t)=1$.
Then the polynomial
\begin{equation}\label{q:pol}
  \poll=X^{{\sigma}^{t-1}}+X^{{\sigma}^{2t-1}}+m(X^{\sigma}-X^{{\sigma}^{t+1}})\in\mathcal{L}_{2t,q}
\end{equation}
is $\Rt$partially scattered.
\end{thm}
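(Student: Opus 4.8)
The plan is to exploit the splitting $\poll=\alpha+\beta$ together with the complementary behaviour of $\alpha$ and $\beta$ recorded in Lemma~\ref{lem:techn}. Fix $x,y\in\Fqn^*$ with $c:=x/y\in\Fqt$ and $\poll(x)/x=\poll(y)/y$; since $y\ne0$, multiplying through shows this last condition is equivalent to $\poll(cy)=c\,\poll(y)$. First I would compute how $\alpha$ and $\beta$ scale under multiplication by $c\in\Fqt$. As $c^{\sigma^t}=c$ for every $c\in\Fqt$ (because $\sigma^t=q^{Jt}$ is a power of $q^t$), one gets $c^{\sigma^{2t-1}}=c^{\sigma^{t-1}}$ and $c^{\sigma^{t+1}}=c^{\sigma}$, whence
\[
\alpha(cy)=c^{\sigma^{t-1}}\alpha(y),\qquad \beta(cy)=c^{\sigma}\beta(y).
\]
Substituting into $\poll(cy)=c\,\poll(y)$ and rearranging yields the single identity
\[
\bigl(c^{\sigma^{t-1}}-c\bigr)\alpha(y)+\bigl(c^{\sigma}-c\bigr)\beta(y)=0.
\]

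The crucial step is then to separate this identity using $\Fqn=\Fqt\oplus W$ (item~3 of Lemma~\ref{lem:techn}). Because $\alpha(y)\in\Fqt$ and $\beta(y)\in W$ by items~5 and~7, and the scalars $c^{\sigma^{t-1}}-c,\ c^{\sigma}-c$ lie in $\Fqt$ (note $\sigma$ preserves $\Fqt$), item~1 gives $(c^{\sigma}-c)\beta(y)\in W$ while $(c^{\sigma^{t-1}}-c)\alpha(y)\in\Fqt$. The directness of the sum then forces both summands to vanish separately. I would also record that $\alpha(y)$ and $\beta(y)$ cannot be simultaneously zero, since $\ker\alpha\cap\ker\beta=W\cap\Fqt=\{0\}$ and $y\ne0$.

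Finally I would run a short case analysis to conclude $c\in\fq$. If $\beta(y)\neq0$, then the second summand forces $c^{\sigma}=c$, and since $\gcd(J,2t)=1$ makes $\sigma$ a generator of $\Gal(\Fqn/\fq)$, this immediately gives $c\in\fq$. If instead $\beta(y)=0$, then $y\in\ker\beta=\Fqt$, so $\alpha$ is injective on $\Fqt$ (its kernel being $W$) and $\alpha(y)\neq0$; the first summand thus forces $c^{\sigma^{t-1}}=c$, i.e. $c$ is fixed by $q^{J(t-1)}$. Combining with $c\in\Fqt$, i.e. $c$ fixed by $q^t$, shows $c$ is fixed by $q^{\gcd(t,\,J(t-1))}$, and since $\gcd(t,t-1)=1$ and $\gcd(J,2t)=1$ force $\gcd(t,J(t-1))=1$, again $c\in\fq$. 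The only genuine subtlety is this last gcd bookkeeping: it is precisely what rules out the exceptional pairs $x,y\in\Fqt$ as a source of non-scattered behaviour, while the rest of the argument reduces entirely to the structural content of Lemma~\ref{lem:techn} and the direct-sum splitting.
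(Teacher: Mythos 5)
Your proof is correct and follows essentially the same route as the paper: both arguments reduce the hypothesis to $\poll(cy)=c\,\poll(y)$ for $c\in\Fqt$ and use the splitting $\F_{q^{2t}}=\Fqt\oplus W$ together with Lemma~\ref{lem:techn} to force $(c^{\sigma^{t-1}}-c)\alpha(y)=0$ and $(c^{\sigma}-c)\beta(y)=0$ separately, concluding from $c^{\sigma}=c$ or $c^{\sigma^{t-1}}=c$. The only cosmetic difference is that the paper decomposes the input $x=x_1+x_2$ whereas you exploit the $\Fqt$-semilinearity of $\alpha$ and $\beta$ directly, and your closing gcd bookkeeping spells out the step the paper compresses into ``in each case we get $\rho\in\fq$''.
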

\begin{proof}
The polynomial $\poll=\alpha+\beta$ is $\Rt$partially scattered if and only if $\poll$ satisfies the condition that for any $\rho \in \F_{q^t}$ and $x \in \F_{q^{n}}$ such that $x\ne 0$, if
\begin{equation}\label{eq:condscattR} \poll(\rho x)=\rho\poll(x),
\end{equation}
then $\rho \in \F_q$.
So, suppose that \eqref{eq:condscattR} holds and, because of 3.\ of Lemma \ref{lem:techn}, we can write
$x=x_1+x_2$,
where $x_1 \in \F_{q^t}$ and $x_2 \in W$.
Using 1., 2., 4.\ and 5.\ of Lemma \ref{lem:techn} we obtain
\[\poll(\rho x)=\alpha(\rho x_1)+\beta(\rho x_2)\]
and
\[ \rho\poll(x)=\rho\alpha(x_1)+\rho\beta(x_2). \]
Since $\F_{q^{2t}}=\F_{q^t}\oplus W$, by 5.\ and 7.\ of Lemma \ref{lem:techn} we have
\begin{equation}\label{eq:systFqt+WR}
\left\{
\begin{array}{ll}
\alpha(\rho x_1)=\rho\alpha(x_1),\\
\beta(\rho x_2)=\rho\beta(x_2),
\end{array}
\right.
\end{equation}
which can be rewritten as
\[
\left\{
\begin{array}{ll}
(\rho^{{\sigma}^{t-1}}-\rho)x_1^{{\sigma}^{t-1}}=0,\\
m(\rho^{\sigma}-\rho)x_2^{\sigma}=0,
\end{array}
\right.
\]
and since $m \ne 0$ and at least one among $x_1$ and $x_2$ is nonzero, then
$\rho^{{\sigma}^{t-1}}-\rho=0$ or $\rho^{\sigma}-\rho=0$. In each case we get $\rho \in \fq$.
\end{proof}

In the following we show that for certain values of $m$ the polynomial $\poll$ is also $\Lt$partially scattered.

\begin{thm}\label{thm:scattpoly}
Let $t\ge3$ be an integer, and $W=\{x\colon x \in \mathbb{F}_{q^{2t}} ,\, x^{q^t}+x=0 \}$.
Assume $\sigma=q^J$, $J\in\{1,\ldots,2t-1\}$, $\gcd(J,2t)=1$.
If $m\in\Fqt$ is neither a $(q - 1)$-th power nor a
$(q + 1)$-th power of any element of $W$
then the polynomial $\poll=X^{{\sigma}^{t-1}}+X^{{\sigma}^{2t-1}}+m(X^{\sigma}-X^{{\sigma}^{t+1}})\in\mathcal{L}_{2t,q}$ is scattered.
\end{thm}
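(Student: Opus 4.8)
The plan is to use the characterization recalled in the introduction, that $\poll$ is scattered precisely when it is simultaneously $\Lt$ and $\Rt$partially scattered. Since Theorem~\ref{thm:Rscattpoly} already supplies the $\Rt$part for every $m\in\Fqt^*$, it remains to show that the hypotheses on $m$ force $\poll$ to be $\Lt$partially scattered; equivalently, that for every $\lambda\in\Fqn$ the kernel of the $\Fq$-linear map $x\mapsto\poll(x)-\lambda x$ is contained in a single one-dimensional $\Fqt$-subspace of $\Fqn$. I would set this up exactly as in the proof of Theorem~\ref{thm:Rscattpoly}: writing $\lambda=\lambda_1+\lambda_2$ and $x=x_1+x_2$ with $\lambda_1,x_1\in\Fqt$ and $\lambda_2,x_2\in W$ according to item~3 of Lemma~\ref{lem:techn}, and using items~1,2,4--7, the eigenvalue equation $\poll(x)=\lambda x$ splits into the pair
\begin{equation*}
\alpha(x_1)=\lambda_1x_1+\lambda_2x_2,\qquad \beta(x_2)=\lambda_2x_1+\lambda_1x_2 .
\end{equation*}
The first computation to record is that $\gcd(J,2t)=1$ forces $J$ odd, whence $x_1^{\sigma^t}=x_1$ and $x_2^{\sigma^t}=-x_2$; therefore $\alpha(x_1)=2x_1^{\sigma^{t-1}}$ and $\beta(x_2)=2mx_2^{\sigma}$, and the system collapses to
\begin{equation*}
2x_1^{\sigma^{t-1}}=\lambda_1x_1+\lambda_2x_2,\qquad 2mx_2^{\sigma}=\lambda_2x_1+\lambda_1x_2 ,
\end{equation*}
the first equation living in $\Fqt$ and the second in $W$.

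Next I would translate the failure of the $\Lt$condition into a condition on $m$. Suppose $x=x_1+x_2$ and $y=y_1+y_2$ solve $\poll(z)=\lambda z$ for one and the same $\lambda$; being $\Fqt$-proportional means exactly $x_1y_2=x_2y_1$, since scaling by $\Fqt$ respects the decomposition $\Fqn=\Fqt\oplus W$. Writing the four equations for $x$ and $y$, eliminating $\lambda_1$ and $\lambda_2$ by the usual cross-multiplications, and separating the outcomes into $\Fqt$- and $W$-components via items~1 and~2 of Lemma~\ref{lem:techn}, I obtain two identities in $x_1,x_2,y_1,y_2$, and the goal is to deduce $x_1y_2-x_2y_1=0$ unless $m$ has a very special shape. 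Here it is convenient to fix a generator $w$ of the one-dimensional $\Fqt$-space $W$, put $x_2=\xi w$, $y_2=\zeta w$ with $\xi,\zeta\in\Fqt$, and reduce everything to $\Fqt$ using $w^2\in\Fqt$ and $w^{\sigma}=\eta w$ with $\eta=w^{\sigma}/w\in\Fqt$; a telescoping product shows that $\N_{q^t/q}(\eta)=-1$, the sign being exactly where $J$ odd re-enters.

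The heart of the argument is the solvability analysis of these two identities, which I would organize according to whether the candidate eigenvectors have nonzero component in $\Fqt$ or lie entirely in $W$. In the first regime one may normalize $x_1=y_1=1$: the $\Fqt$-component identity forces $\zeta/\xi\in\Fq$, and substituting back into the $W$-component identity forces either $\xi=\zeta$ (so that $x$ and $y$ are already $\Fqt$-proportional) or $m\eta=\xi^{1-\sigma}$. In the complementary regime, where one eigenvector lies in $\Fqt$ and the other in $W$, the same elimination produces a relation of the type $m=x_1^{\sigma^{t-1}-1}x_2^{1-\sigma}$. In each regime Hilbert's Theorem~90 for the cyclic extension $\Fqt/\Fq$ (available because $\gcd(J,t)=1$ makes $x\mapsto x^{\sigma}$ generate its Galois group) identifies the admissible right-hand sides with a fixed coset of the norm-one subgroup; unwinding these cosets through $w$ should exhibit the corresponding values of $m$ as the images of the $(q-1)$-th, respectively the $(q+1)$-th, power map on $W^*$, which is precisely the pair of families the hypothesis excludes. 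Consequently no offending pair $(x,y)$ can exist, $\poll$ is $\Lt$partially scattered, and hence scattered. I expect the main obstacle to be exactly this last identification: carrying the semilinear bookkeeping through without sign slips — the value $\N_{q^t/q}(\eta)=-1$ is delicate — and verifying that the two eigenvector regimes genuinely exhaust every way a common eigenvalue can be shared, so that ruling out the two power-families really suffices.
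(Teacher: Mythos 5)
Your reduction to the $\Lt$ condition via Theorem~\ref{thm:Rscattpoly}, the decomposition along $\F_{q^{2t}}=\Fqt\oplus W$, and the simplifications $\alpha(x_1)=2x_1^{\sigma^{t-1}}$, $\beta(x_2)=2mx_2^{\sigma}$ are all correct and broadly parallel the paper's strategy (the paper encodes the $\Lt$ condition as $\poll(\rho x)=\rho\poll(x)\Rightarrow\rho\in\Fqt$ and splits $\rho=h+r$, $x=x_1+x_2$, which is equivalent bookkeeping to your common-eigenvalue formulation). Your degenerate regime (one vector in $\Fqt$, the other in $W$) does yield $m=x_1^{\sigma^{t-1}-1}x_2^{1-\sigma}$, a $(\sigma-1)$-th power of an element of $W$ and hence a $(q-1)$-th power since $J$ is odd; this matches the paper's Cases 1--3.

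The gap is in your main regime. The normalization $x_1=y_1=1$ is not available: $\poll$ is only $\Fq$-linear, not $\Fqt$-semilinear (indeed $\alpha(cx_1)=2c^{\sigma^{t-1}}x_1^{\sigma^{t-1}}\ne c\,\alpha(x_1)$ for general $c\in\Fqt$), so rescaling $x$ and $y$ independently by $x_1^{-1}$ and $y_1^{-1}$ changes their eigenvalues $\poll(x)/x$ and $\poll(y)/y$ in unrelated ways, and the rescaled vectors need no longer share an eigenvalue. With that normalization your regime-1 elimination collapses to $\lambda_2(x_2-y_2)=0$ and then only ever produces the relation $m=x_2^{1-\sigma}$, i.e.\ only the $(q-1)$-power obstruction. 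But Proposition~\ref{p:vversa} shows that when $m$ is a $(\sigma+1)$-th power of an element of $W$ the polynomial is genuinely not scattered, so a correct treatment of the generic case must produce the $(q+1)$-power condition somewhere; in the paper it arises exactly from the sub-case in which the determinant $D$ of the $2\times2$ system in $(r,r^{\sigma})$ vanishes, forcing $m^{\sigma}x_2^{\sigma^2+\sigma}=x_1^{\sigma+1}$ and hence $m=(x_1^{\sigma^{t-1}}/x_2)^{\sigma+1}$, while the sub-case $D\ne0$ is settled by Cramer's rule and gives the $(\sigma-1)$-power condition. Neither mechanism appears in your sketch, so the heart of the proof --- the unnormalized elimination when $x_1$, $x_2$ and the $W$-part of the ratio are all nonzero --- is missing, and the obstacle you flag at the end (exhaustiveness of the two regimes) is also real, since a vector with both components nonzero paired with one lying in $W$ falls into neither regime as you describe them.
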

\begin{proof}
By Theorem~\ref{thm:Rscattpoly}, it is enough to prove that $\poll$ is $\Lt$partially scattered;
that is, it satisfies the condition that for any
$\rho,x \in \F_{q^{n}}$ such that $x\ne 0$, if
\begin{equation}\label{eq:condscatt2} \poll(\rho x)=\rho\poll(x),
\end{equation}
then $\rho \in \Fqt$.
So, suppose that \eqref{eq:condscatt2} holds. Because of 3.\ of Lemma \ref{lem:techn},
we can write
\[ \rho = h + r\,\,\,\text{and}\,\,\, x=x_1+x_2, \]
where $h,x_1 \in \F_{q^t}$ and $r,x_2 \in W$.
Using 1., 2., 4.\ and 5.\ of Lemma \ref{lem:techn} we obtain
\[\poll(\rho x)=\alpha(hx_1)+\alpha(rx_2)+\beta(hx_2)+\beta(rx_1)\]
and
\[ \rho\poll(x)=h\alpha(x_1)+h\beta(x_2)+r\alpha(x_1)+r\beta(x_2). \]
Since $\F_{q^{2t}}=\F_{q^t}\oplus W$, by 5. and 7. of Lemma \ref{lem:techn} we have
\begin{equation}\label{eq:systFqt+W}
\left\{
\begin{array}{ll}
\alpha(hx_1)+\alpha(rx_2)=h\alpha(x_1)+r\beta(x_2),\\
\beta(hx_2)+\beta(rx_1)=h\beta(x_2)+r\alpha(x_1),
\end{array}
\right.
\end{equation}
which can be rewritten as
\[
\left\{
\begin{array}{ll}
mx_2^{\sigma} r-x_2^{{\sigma}^{t-1}}r^{{\sigma}^{t-1}}=(h^{{\sigma}^{t-1}}-h)x_1^{{\sigma}^{t-1}},\\
x_1^{{\sigma}^{t-1}}r-mx_1^{\sigma}r^{\sigma}=-m(h-h^{\sigma})x_2^{\sigma},
\end{array}
\right.
\]
and raising the first equation to the ${\sigma}$ we obtain
\begin{equation}\label{eq:systrrq}
\left\{
\begin{array}{ll}
-x_2r+m^{\sigma}x_2^{{\sigma}^2} r^{\sigma}=(h-h^{\sigma})x_1,\\
-x_1^{{\sigma}^{t-1}}r+mx_1^{\sigma}r^{\sigma}=m(h-h^{\sigma})x_2^{\sigma}.
\end{array}
\right.
\end{equation}
Suppose by contradiction that $r\ne 0$. We divide the proof in four cases.
Since $J$ is odd, every $(\sigma-1)$-th power (resp.\ $(\sigma+1)$-th power) of an element of $W$ is also a $(q-1)$-th power (resp.\ $(q+1)$-th power) of an element of $W$.\\
\textbf{Case 1}: $x_1=0$.\\
In this case $x_2\ne 0$, and from the first equation of \eqref{eq:systrrq} we obtain
\[ m^\sigma=x_2^{1-\sigma^2}r^{1-\sigma}=\left(x_2^{-1-\sigma}r^{-1}\right)^{\sigma-1}, \]
that is $m=\delta^{q-1}$ for some $\delta \in W$, a contradiction to our assumptions.\\
\textbf{Case 2}: $x_2=0$.\\
We have $x_1\ne 0$.
From the second equation of \eqref{eq:systrrq} we obtain
\[ m=x_1^{\sigma^{t-1}-\sigma}r^{1-\sigma}=\left(x_1^{\sigma(1+\sigma+\cdots+\sigma^{t-3})}/r\right)^{\sigma-1}, \]
again a contradiction.\\
\textbf{Case 3}: $h-h^\sigma=0$. \\
Argue as in Case 1.\ or 2, depending on whether $x_2\neq0$ or $x_1\neq0$.\\
\textbf{Case 4}: $x_1x_2(h-h^{\sigma}) \ne 0$.\\
We start by proving that
\[ D=\det \left(
\begin{array}{cc}
-x_2 & m^{\sigma}x_2^{{\sigma}^2}\\
-x_1^{{\sigma}^{t-1}} & m x_1^{\sigma}
\end{array}
\right)
\]
is non zero. Indeed, if $D=0$ then, since \eqref{eq:systrrq} admits solutions
for $r$ and $r^{\sigma}$, we have
\[ \mathrm{rk}
\left(
\begin{array}{ccc}
-x_2 & m^{\sigma}x_2^{{\sigma}^2} & (h-h^{\sigma})x_1 \\
-x_1^{{\sigma}^{t-1}} & m x_1^{\sigma} & m(h-h^{\sigma})x_2^{\sigma}
\end{array}
\right)
=1,
\]
and in particular
\[ \det \left(
\begin{array}{cc}
m^{\sigma}x_2^{{\sigma}^2} & (h-h^{\sigma})x_1 \\
m x_1^{\sigma} & m(h-h^{\sigma})x_2^{\sigma}
\end{array}
\right)=0,
\]
that is
\[ m^{{\sigma}}x_2^{{\sigma}^2+{\sigma}}-x_1^{{\sigma}+1}=0\ \Rightarrow\ m=u^{\sigma+1}, \]
where
\[
u=\frac{x_1^{\sigma^{t-1}}}{x_2}\in W,
\]
a contradiction. Therefore $D \ne 0$.
From \eqref{eq:systrrq} we obtain
\[ r= \frac{\det\left(\begin{array}{cc}
(h-h^{\sigma})x_1 & m^{\sigma} x_2^{{\sigma}^2}\\
m(h-h^{\sigma})x_2^{\sigma} & mx_1^{\sigma}
\end{array}\right)}{D}=\frac{mx_1^{{\sigma}+1}-m^{{\sigma}+1}x_2^{{\sigma}^2+{\sigma}}}{D}(h-h^{\sigma}) \]
and
\[ r^{\sigma}= \frac{\det\left(\begin{array}{cc}
-x_2 & (h-h^{\sigma})x_1\\
-x_1^{{\sigma}^{t-1}} & m(h-h^{\sigma})x_2^{\sigma}
\end{array}\right)}{D}=\frac{-mx_2^{{\sigma}+1}+x_1^{{\sigma}^{t-1}+1}}{D}(h-h^{\sigma}). \]
Therefore,
\[ r^{{\sigma}-1}=\frac{-mx_2^{{\sigma}+1}+x_1^{{\sigma}^{t-1}+1}}{mx_1^{{\sigma}+1}-m^{{\sigma}+1}x_2^{{\sigma}^2+{\sigma}}}=\frac{1}m (-mx_2^{{\sigma}+1}+x_1^{{\sigma}^{t-1}+1})^{1-{\sigma}}, \]
that is $m$ is a $(\sigma-1)$-th power of an element in $W$, again a contradiction.

In each of the cases analyzed, the condition $r\neq0$ leads to a contradiction.
It follows that $\rho \in \F_{q^t}$.
\end{proof}

By the following result at least one of the assumptions above cannot be removed.
\begin{prop}\label{p:vversa}
  Let $t\geq3$. If $m$ is a $(\sigma+1)$-th power of an element of $W$, then $\poll$ is not scattered.
\end{prop}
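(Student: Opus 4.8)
The plan is to exhibit a witness to the failure of the scattered property. Since $\poll$ is $\Rt$partially scattered by Theorem~\ref{thm:Rscattpoly}, it is scattered if and only if it is $\Lt$partially scattered; hence it suffices to produce $\rho\in\Fqn\setminus\Fqt$ and $x\in\Fqn^*$ with $\poll(\rho x)=\rho\poll(x)$. Following the reductions in the proof of Theorem~\ref{thm:scattpoly}, writing $\rho=h+r$ and $x=x_1+x_2$ with $h,x_1\in\Fqt$ and $r,x_2\in W$, this is equivalent to solving the system \eqref{eq:systrrq} with $r\neq0$. I would therefore reverse-engineer the obstruction that appeared in Case~4 of that proof, where the vanishing of the determinant $D$ forced $m$ to be a $(\sigma+1)$-th power; here that hypothesis is granted, so the idea is to run the argument backwards and build an honest solution.

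Concretely, write $m=\delta^{\sigma+1}$ with $\delta\in W\setminus\{0\}$, choose any $x_2\in W\setminus\{0\}$, and set $x_1=(\delta x_2)^{\sigma}$. Since $\delta x_2\in\Fqt$ by Lemma~\ref{lem:techn} and $\Fqt$ is $\sigma$-invariant, we get $x_1\in\Fqt$ with $x_1\neq0$; moreover $x_1^{\sigma^{t-1}}=\delta x_2$, so that $u=x_1^{\sigma^{t-1}}/x_2=\delta$ and the determinant $D$ appearing in Case~4 of the proof of Theorem~\ref{thm:scattpoly} vanishes. The key computation is then to verify that, with these choices, the second equation of \eqref{eq:systrrq} is \emph{exactly} $\delta$ times the first: both the coefficient functionals in $(r,r^{\sigma})$ and the right-hand sides scale by the common factor $\delta$, using $m=\delta^{\sigma+1}$ together with $x_1=(\delta x_2)^{\sigma}$. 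Consequently the whole system collapses to the single equation $-x_2 r+m^{\sigma}x_2^{\sigma^2}r^{\sigma}=(h-h^{\sigma})x_1$, which now must hold for some admissible $h$ and $r$.

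It then remains to solve this one equation with $r\neq0$. Its left-hand side defines an $\F_q$-linear map $L\colon W\to\Fqt$, $L(r)=-x_2 r+m^{\sigma}x_2^{\sigma^2}r^{\sigma}$, which does land in $\Fqt$ by parts~1 and~2 of Lemma~\ref{lem:techn}; the right-hand sides fill the subspace $S=x_1\cdot(\mathrm{id}-\sigma)(\Fqt)$, of $\F_q$-codimension $1$ in $\Fqt$ because the fixed field of $\sigma$ in $\Fqt$ is $\Fq$. A rank--nullity count over $\F_q$ gives $\dim_{\F_q}L^{-1}(S)\ge\dim_{\F_q}W-1=t-1\ge1$, so there is $r\neq0$ with $L(r)=(h-h^{\sigma})x_1$ for a suitable $h\in\Fqt$. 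Taking $\rho=h+r$, which lies outside $\Fqt$ precisely because $r\neq0$, and $x=x_1+x_2\neq0$, yields a pair satisfying \eqref{eq:systrrq}, hence $\poll(\rho x)=\rho\poll(x)$ with $\rho\notin\Fqt$; therefore $\poll$ is not $\Lt$partially scattered and not scattered.

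The main obstacle is the middle step: confirming that the choice $x_1=(\delta x_2)^{\sigma}$ really forces the two equations of \eqref{eq:systrrq} to be proportional, not merely to share the vanishing of $D$. This is where the precise exponent $\sigma+1$ (as opposed to $\sigma-1$) enters, and it is what reduces an a priori overdetermined $2\times2$ semilinear system to a single solvable equation. The concluding dimension count is routine, but it must be carried out over $\Fq$ rather than $\Fqt$, since $r\mapsto r^{\sigma}$ is only $\sigma$-semilinear and $L$ is not $\Fqt$-linear.
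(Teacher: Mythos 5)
Your proof is correct and follows essentially the same route as the paper's: the paper also specializes the system \eqref{eq:systrrq} at a point where $D=0$ (it takes $x_1=1$, $x_2=w^{-1}$ with $m=w^{\sigma+1}$, which is the instance $x_2=\delta^{-1}$ of your family), observes that the two equations become proportional so that the system collapses to the single equation $-r+w^{\sigma+1}r^{\sigma}=w(h-h^{\sigma})$, and then finds a nonzero $r$ by an $\Fq$-dimension count (intersecting two images of dimension at least $t-1$ inside $W$, which is equivalent to your preimage-of-a-codimension-one-subspace argument).
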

\begin{proof}
   By assumption $m=w^{\sigma+1}$ where $w\in W$.
   Define $x_1=1$, $x_2=w^{-1}$.
   Under these assumptions the equations in \eqref{eq:systrrq} coincide up to a factor with
   \[
     -r+w^{\sigma+1}r^{\sigma}=w(h-h^\sigma).
   \]
   The images of the $\fq$-linear maps $r\in W\mapsto -r+w^{\sigma+1}r^{\sigma}\in W$ and $h\in\Fqt\mapsto w(h-h^\sigma)\in W$ both are of $\Fq$-dimension at least $t-1$; this implies that their intersection is not trivial, and $r\in W$, $h\in\Fqt$ exist such that $r\neq0$ and \eqref{eq:systrrq} is satisfied.
\end{proof}

\begin{prop}\label{p:powers}
    \begin{enumerate}[$(i)$]
    \item For any $t\ge3$ there are precisely $(q^t-1)/(q-1)$ elements of $\Fqts$ which are $(q-1)$-th powers of elements in $W$; more precisely, they are the solutions of
    \[ x^{\frac{q^t-1}{q-1}}=-1,\quad x\in\fqn.\]
    \item If $t$ is even, then there are precisely $(q^t-1)/(q+1)$ elements of $\Fqts$ which are $(q+1)$-th powers of elements in $W$; more precisely, they are the solutions of
    \[ x^{\frac{q^t-1}{q+1}}=-1,\quad x\in\fqn.\]
    \item If $t$ is odd, then there are precisely $(q^t-1)/2$ elements of $\Fqts$ which are $(q+1)$-th powers of elements in $W$; more precisely, they are the solutions of
    \[ x^{\frac{q^t-1}{2}}=(-1)^{\frac{q+1}2},\quad x\in\fqn.\]
    \end{enumerate}
\end{prop}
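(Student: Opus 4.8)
The plan is to transport everything into the cyclic multiplicative group $\fqn^*=\mathbb{F}_{q^{2t}}^*$ and reduce all three items to a single gcd computation. The starting point is the key identity $w^{q^t-1}=-1$ for every nonzero $w\in W$, which is immediate from $w^{q^t}=-w$. Fix once and for all a nonzero $w_0\in W$. Since $W$ is a one-dimensional $\Fqt$-space and $A w_0\in W$ for $A\in\Fqt$ by part~1 of Lemma~\ref{lem:techn}, we get $W\setminus\{0\}=w_0\,\Fqts$. Consequently, for any exponent $e$ the set of $e$-th powers of nonzero elements of $W$ is the coset $w_0^{\,e}\,(\Fqts)^e$ of the subgroup of $e$-th powers of $\Fqts$, and hence has cardinality $(q^t-1)/\gcd(e,q^t-1)$. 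The whole proposition is the special case $e\in\{q-1,q+1\}$.

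Next I would record two facts. First, for $e\in\{q-1,q+1\}$ every such $e$-th power already lies in $\Fqt$: applying Frobenius and using that $e$ is even (as $q$ is odd), $(w^{e})^{q^t}=(w^{q^t})^{e}=(-w)^{e}=w^{e}$, so $w^e\in\Fqt$. Thus the set described in each item is exactly $w_0^{\,e}\,(\Fqts)^e$. Second, I compute the relevant gcd's: $\gcd(q-1,q^t-1)=q-1$ always, while from $q\equiv-1\pmod{q+1}$ we get $q^t-1\equiv(-1)^t-1\pmod{q+1}$, so $\gcd(q+1,q^t-1)$ equals $q+1$ when $t$ is even and $\gcd(q+1,2)=2$ when $t$ is odd. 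This yields the three cardinalities $(q^t-1)/(q-1)$, $(q^t-1)/(q+1)$ and $(q^t-1)/2$, and explains the dichotomy between items $(ii)$ and $(iii)$.

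For the explicit ``more precisely'' descriptions I would substitute the key identity into each candidate equation. For a $(q-1)$-th power $y=w^{q-1}$, $y^{(q^t-1)/(q-1)}=w^{q^t-1}=-1$; for a $(q+1)$-th power with $t$ even, $y^{(q^t-1)/(q+1)}=w^{q^t-1}=-1$; and for $t$ odd, $y^{(q^t-1)/2}=\bigl(w^{q^t-1}\bigr)^{(q+1)/2}=(-1)^{(q+1)/2}$, which is precisely what forces the exponent $(q^t-1)/2$, rather than $(q^t-1)/(q+1)$, in the odd case. Hence in each item every element of the set under consideration is a solution of the displayed equation.

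It remains to rule out further solutions, which I would do by a counting argument rather than a solubility check. In the cyclic group $\fqn^*$ of order $q^{2t}-1$, the map $x\mapsto x^{N}$ has fibres of size $0$ or $\gcd(N,q^{2t}-1)$; in all three cases $N$ divides $q^t-1$ and hence $q^{2t}-1$, so $\gcd(N,q^{2t}-1)=N$ and the displayed equation has at most $N$ solutions. Since the previous paragraph exhibits the full coset $w_0^{\,e}(\Fqts)^e$, of size exactly $N$, as a set of distinct solutions, the solution set coincides with the set of $(q\pm1)$-th powers, giving both the count and the characterization. The only genuinely delicate point is the parity-dependent gcd $\gcd(q+1,q^t-1)$ and the corresponding correct choice of exponent in $(iii)$; everything else is bookkeeping, and exhibiting explicit solutions conveniently bypasses any separate verification that the equations are solvable.
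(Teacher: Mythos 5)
Your proposal is correct and follows essentially the same route as the paper: both identify the set of $e$-th powers of $W\setminus\{0\}$ as the coset $w_0^{e}(\F_{q^t}^*)^{e}$, use $w_0^{q^t-1}=-1$ together with the parity-dependent computation of $\gcd(q\pm1,q^t-1)$, and thereby obtain both the cardinality and the defining equation. The only cosmetic difference is at the end, where the paper derives the equation $x^{(q^t-1)/\delta}=(-1)^{D/\delta}$ directly by translating the coset, whereas you verify one inclusion and close the gap by counting solutions of a power equation in the cyclic group $\F_{q^{2t}}^*$; this is an equivalent amount of work.
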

\begin{proof}
    Let $W^*=W\setminus\{0\}$.
    For any positive integer $D $ define the set $S_D $ of all $D $-powers of elements of $W^*$.
    Let $\delta=\gcd(D ,q^t-1)$.
    The $D $-powers of elements of $\Fqts$ are precisely the solutions of the equation
    \[ x^{\frac{q^t-1}{\delta}}=1,\quad x\in\Fqn.\]
    Let $w_0\in W^*$.
    It holds $(w_0^D )^{\frac{q^t-1}{\delta}}=(w_0^{q^t-1})^{D /\delta}=(-1)^{D /\delta}$.
    We have
    \[
      S_D =\{w_0^D  y^D \colon y\in\Fqts\}=\{w_0^D  x\colon x^{\frac{q^t-1}{\delta}}=1,\,x\in\Fqn\}.
    \]
    Therefore, $S_D $ has equation
    \begin{equation}\label{e:pow}
        x^{\frac{q^t-1}{\delta}}=(-1)^{D /\delta}.
    \end{equation}
      Taking into account that
      \[
        \gcd(q+1,q^t-1)=\left\{ \begin{array}{cc} q+1&\mbox{ for $t$ even,}\\ 2&\mbox{ for $t$ odd,}\end{array}\right.
      \]
      the statements $(i)$, $(ii)$, and $(iii)$ follow from \eqref{e:pow}.
\end{proof}
\begin{cor}\label{c:two}
    There is at least one scattered polynomial of type $\poll$
    for any $t\ge4$ even and $q\ge3$, or $t\ge3$ and $q>3$.
\end{cor}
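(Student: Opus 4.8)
The plan is to derive this purely as a counting consequence of Theorem~\ref{thm:scattpoly} together with Proposition~\ref{p:powers}. By Theorem~\ref{thm:scattpoly}, it suffices to exhibit a single $m\in\Fqts$ that is neither a $(q-1)$-th power nor a $(q+1)$-th power of an element of $W$. Write $A$ for the set of elements of $\Fqts$ that are $(q-1)$-th powers of elements of $W$, and $B$ for the set of those that are $(q+1)$-th powers of elements of $W$. Since $|\Fqts|=q^t-1$ and $|A\cup B|\le|A|+|B|$, it is enough to check that $|A|+|B|<q^t-1$ in each claimed range; any $m$ in the nonempty complement $\Fqts\setminus(A\cup B)$ then gives a scattered $\poll$. (One should note that membership in $A\cup B$, as counted here over nonzero elements, together with $m\neq0$ already captures the full hypothesis of Theorem~\ref{thm:scattpoly}, since $0$ is not a power of any nonzero element of $W$.)

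Next I would substitute the exact cardinalities from Proposition~\ref{p:powers}. In every case $|A|=(q^t-1)/(q-1)$ by part~$(i)$. For $t$ even, part~$(ii)$ gives $|B|=(q^t-1)/(q+1)$, so after dividing the target inequality by $q^t-1$ it becomes
\[
\frac1{q-1}+\frac1{q+1}=\frac{2q}{q^2-1}<1,
\]
equivalently $q^2-2q-1>0$, i.e.\ $q>1+\sqrt2$, which holds for every odd prime power $q\ge3$. For $t$ odd, part~$(iii)$ gives $|B|=(q^t-1)/2$, and the inequality reduces to
\[
\frac1{q-1}+\frac12<1,
\]
that is $q-1>2$, which holds exactly when $q>3$. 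Assembling the two parity cases yields existence for all even $t\ge4$ with $q\ge3$ and for all $t\ge3$ with $q>3$, as stated.

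I do not expect a genuine obstacle here, since the whole argument is an estimate via the union bound; the one point deserving attention is that the bound produces a surviving element only under the \emph{strict} inequality $|A|+|B|<q^t-1$. This is exactly why $q=3$ must be excluded in the odd-$t$ case: there $|A|=|B|=(q^t-1)/2$ sum precisely to $q^t-1$, so the crude count no longer forces an element outside $A\cup B$, and settling that borderline would require controlling the overlap $|A\cap B|$. As that case falls outside the asserted ranges, it can be left aside.
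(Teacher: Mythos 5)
Your proposal is correct and follows exactly the paper's argument: apply Theorem~\ref{thm:scattpoly} after checking via the union bound and Proposition~\ref{p:powers} that $|S_{q-1}|+|S_{q+1}|<q^t-1$. You in fact supply the explicit arithmetic (including the observation that for $t$ odd and $q=3$ the two counts sum to exactly $q^t-1$, which is why that case is excluded) that the paper's one-line proof leaves implicit.
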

\begin{proof}
    The sum of the sizes of $S_{q-1}$ and $S_{q+1}$ is less than $q^t-1$, hence there exist in $\Fqts$ elements which are neither $(q-1)$- nor $(q+1)$-powers of elements of $W$.
    Therefore, Theorem~\ref{thm:scattpoly} can be applied for at least one value of $m$.
\end{proof}

\begin{rem}
By Theorem~\ref{thm:scattpoly} and Proposition~\ref{p:powers}, if $t$ is even, or $t$ is odd and $q \equiv 1 \pmod{4}$, then $\varphi_{1,q}$ is scattered.
On the other hand, by Proposition~\ref{p:vversa}, if $t$ is odd and $q \equiv 3 \pmod{4}$, then $\varphi_{1,q}$ is not scattered.
This is consistent with \cite[Theorem 2.4]{LZ} for $k=1$.
It follows that that the family we are studying contains examples of R-$q^t$-partially scattered polynomials that are not scattered.
\end{rem}

\section{Matrices stabilizing the graph of $\poll$}\label{sec:stabil}

Here we investigate the matrices in $\F_{q^n}^{2\times2}$ that stabilize the graph $U_{m,\sigma}$ of $\poll$.
More precisely, we will compute the set $\mathbb S_{m,\sigma}$ of matrices $A$ such that $AU_{m,\sigma}\subseteq U_{m,\sigma}$.
Such $\mathbb S_{m,\sigma}$ has been studied in \cite{SZZ}, where in particular it has been proved that it is isomorphic to the right idealizer of $\mathcal C_{\poll}$.
As a consequence, $\mathbb S_{m,\sigma}$ is invariant up to equivalence of polynomials.

For use in the Theorem~\ref{t:stab} we prove the following
\begin{prop}\label{p:premstab}
Let $t$ be even.
The set $S$ of all $x\in\Fqts$ which are $(\sigma+1)$-powers of elements in $W$ coincides with
$
  \left\{x\colon x\in\Fqt,\,x^{\frac{\sigma^t-1}{\sigma+1}}=-1\right\}.
$
\end{prop}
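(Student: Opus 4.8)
The plan is to reuse the computation carried out in the proof of Proposition~\ref{p:powers}, now with the exponent $D=\sigma+1=q^{J}+1$, and then to rewrite the resulting equation in the form appearing in the statement.

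First, I would observe that $S$ is precisely the set of all $D$-th powers of elements of $W^{*}=W\setminus\{0\}$, with $D=q^{J}+1$: indeed every such power lies in $\Fqts$, since for $w\in W^{*}$ one has $w^{\sigma}\in W$, whence $w^{\sigma+1}=w^{\sigma}\,w\in\Fqt$ by part~2 of Lemma~\ref{lem:techn} (and it is nonzero). The derivation of \eqref{e:pow} in the proof of Proposition~\ref{p:powers} is valid for an arbitrary exponent $D$; applying it here gives that $S$ is the solution set in $\Fqts$ of
\[
x^{\frac{q^{t}-1}{\delta}}=(-1)^{D/\delta},\qquad \delta=\gcd(q^{J}+1,\,q^{t}-1).
\]

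Next I would evaluate $\delta$ and the sign. Because $J$ is odd and $t$ is even (and $\gcd(J,t)=1$), a standard $\gcd$ computation gives $\delta=q+1$: one has $q+1\mid q^{J}+1$ and $q+1\mid q^{t}-1$, while $\delta\mid\gcd(q^{2J}-1,q^{t}-1)=q^{\gcd(2J,t)}-1=q^{2}-1$, and a $2$-adic valuation comparison rules out a further factor $2$. Moreover $D/\delta=(q^{J}+1)/(q+1)$, reduced modulo $2$, is $\equiv J\equiv1$, hence odd, so $(-1)^{D/\delta}=-1$. Therefore $S=\{x\in\Fqts\colon x^{(q^{t}-1)/(q+1)}=-1\}$.

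It then remains to identify this with the set in the statement, i.e.\ to show that on $\Fqts$ the equation $x^{(\sigma^{t}-1)/(\sigma+1)}=-1$ is equivalent to $x^{(q^{t}-1)/(q+1)}=-1$; here the exponent $e_{2}=(\sigma^{t}-1)/(\sigma+1)=(q^{Jt}-1)/(q^{J}+1)$ is an integer because $t$ is even. I would reduce $e_{2}$ modulo $q^{t}-1$: since $t$ is even,
\[
e_{2}=(q^{J}-1)\sum_{i=0}^{t/2-1}q^{2Ji},
\]
and as $\gcd(J,t/2)=1$ the residues $2Ji\bmod t$, $0\le i\le t/2-1$, run exactly through $\{0,2,\dots,t-2\}$, so that $\sum_{i}q^{2Ji}\equiv(q^{t}-1)/(q^{2}-1)\pmod{q^{t}-1}$ and hence
\[
e_{2}\equiv(q^{J}-1)\,\frac{q^{t}-1}{q^{2}-1}=c\cdot\frac{q^{t}-1}{q+1}\pmod{q^{t}-1},\qquad c=\frac{q^{J}-1}{q-1}.
\]
Finally, for $x\in\Fqts$ set $z=x^{(q^{t}-1)/(q+1)}$, so $z^{q+1}=1$; since $c=1+q+\dots+q^{J-1}\equiv1\pmod{q+1}$ (as $q\equiv-1$ and $J$ is odd), I get $x^{e_{2}}=z^{c}=z$, and therefore $x^{e_{2}}=-1$ if and only if $z=-1$. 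This shows the two equations have the same solutions in $\Fqts$, giving the claimed description of $S$.

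I expect the last paragraph to be the crux: the reduction of $e_{2}$ modulo $q^{t}-1$ relies on the bijectivity of the residues $2Ji\bmod t$, and the final collapse $z^{c}=z$ depends on the congruence $c\equiv1\pmod{q+1}$. The $\gcd$ and parity computations of the second paragraph are routine but must be handled with care, since they pin down both the modulus $q+1$ and the sign $-1$.
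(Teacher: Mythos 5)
Your proof is correct, and it reaches the statement by a route that differs in a meaningful way from the paper's. The paper's argument is a pure coset computation carried out entirely in base $\sigma$: it writes $S=\{h^{\sigma+1}x_0\colon h\in\Fqts\}$ for a fixed $x_0=\xi^{\sigma+1}$ with $\xi\in W\setminus\{0\}$, notes that $x_0^{(\sigma^t-1)/(\sigma+1)}=\xi^{\sigma^t-1}=-1$, and invokes the fact that the $(\sigma+1)$-powers of $\Fqts$ are exactly the solutions of $x^{(\sigma^t-1)/(\sigma+1)}=1$; the desired equation then follows immediately. You instead specialize the machinery of Proposition~\ref{p:powers} (the same underlying coset idea, but expressed in base $q$), which lands you on the equation $x^{(q^t-1)/(q+1)}=-1$, and you must then do genuine extra work --- the reduction of $(\sigma^t-1)/(\sigma+1)$ modulo $q^t-1$ via the residues $2Ji\bmod t$, and the congruence $c\equiv1\pmod{q+1}$ --- to convert this into the $\sigma$-form demanded by the statement. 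What your longer route buys is that it explicitly verifies a point the paper leaves implicit, namely that $x^{(\sigma^t-1)/(\sigma+1)}=1$ has exactly $(q^t-1)/(q+1)$ solutions in $\Fqts$ and hence really does cut out the subgroup of $(\sigma+1)$-powers. One step to tighten: in computing $\delta=\gcd(q^J+1,q^t-1)$ you reduce to $(q+1)\mid\delta\mid q^2-1$ and then say a $2$-adic valuation comparison rules out ``a further factor $2$''; a priori $\delta$ could be $(q+1)m$ for any $m\mid q-1$, so you must also exclude odd $m>1$ (immediate, since an odd divisor of $q-1$ that divides $q^J+1$ must divide $2$), or, more simply, observe that $q^J+1\equiv q+1\pmod{q^2-1}$ because $J$ is odd, which yields $\delta=\gcd(q+1,q^2-1)=q+1$ in one stroke.
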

\begin{proof}
    If $x_0=\xi^{\sigma+1}$ and $y=\eta^{\sigma+1}$ for $\xi,\eta\in W\setminus\{0\}$, then $x_0y^{-1}=(\xi\eta^{-1})^{\sigma+1}$ is a $(\sigma+1)$-power of an element of $\Fqts$; that is, $y=\ell^{\sigma+1}x_0$ for some $\ell\in\Fqts$.
    Conversely, $h^{\sigma+1}x_0\in S$ for any $h\in\Fqts$.
    So,
    \[
    S=\{h^{\sigma+1}x_0\colon h\in\Fqts\}.
    \]
    The assertion follows by combining $(i)$~$x_0^{\frac{\sigma^t-1}{\sigma+1}}=\xi^{\sigma^t-1}=-1$, and $(ii)$~since $t$ is even and $\sigma+1$ divides $\sigma^t-1$, the set of all $(\sigma+1)$-powers of elements of $\Fqts$ has equation $x^{\frac{\sigma^t-1}{\sigma+1}}=1$.
\end{proof}

\begin{thm}\label{t:stab}
  Suppose that $t>4$.
  Then the set $\mathbb S_{m,\sigma}$ of matrices  $A\in\F_{q^n}^{2\times2}$ such that $AU_{m,\sigma}\subseteq U_{m,\sigma}$ is
  equal to
\begin{equation}\label{e:stab}
\left\{\begin{pmatrix}a&b\\ 4mb^\sigma&a^\sigma\end{pmatrix}\ \colon\ a\in\F_{q^{\gcd(t,2)}},\ b\in\F_{q^n},\ b=-b^{\sigma^t}=m^{\sigma-1}b^{\sigma^2}\right\}.
\end{equation}
For $t$ even, $\mathbb S_{m,\sigma}$ contains non-diagonal matrices if and only if $m$ is a $(\sigma+1)$-power of an element of $W$; in this case, $b$ takes $q^2$ distinct values.

For $t$ odd, $b$ takes always $q$ distinct values.
\end{thm}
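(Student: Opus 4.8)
The plan is to translate the stabilizer condition $AU_{m,\sigma}\subseteq U_{m,\sigma}$ into a functional equation for the polynomial $\poll$ and then exploit the block structure $\F_{q^{2t}}=\F_{q^t}\oplus W$ established in Lemma~\ref{lem:techn}. Writing $A=\begin{pmatrix}a&b\\ c&d\end{pmatrix}$, the inclusion $A(x,\poll(x))^{\mathsf T}\in U_{m,\sigma}$ for all $x$ says precisely that $\poll(ax+b\poll(x))=cx+d\poll(x)$ as an identity in $\mathcal L_{2t,q}$. Since $\poll=\alpha+\beta$ with $\alpha,\beta$ interchanging $\F_{q^t}$ and $W$ (parts 4.--7.\ of Lemma~\ref{lem:techn}), I would first decompose $x=x_1+x_2$ with $x_1\in\F_{q^t}$, $x_2\in W$, and observe that the two components of the identity split. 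This gives a polynomial identity in $x_1,x_2$ which, after collecting the independent monomials $x_1^{\sigma^i}$ and $x_2^{\sigma^i}$, becomes a system of coefficient equations in $a,b,c,d$ over $\fqn$.

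The key computational step is to equate coefficients. Because $\poll$ has $q$-degree roughly $\sigma^{2t-1}$ and composition with the outer $\poll$ produces terms in $\sigma^i$ for many $i$, the hypothesis $t>4$ is what guarantees enough distinct exponents $\sigma^i$ appear so that the monomials do not collide modulo $\sigma^t-$relations; this separation is what forces most cross-terms to vanish. I expect the leading comparison to yield $c=4mb^\sigma$ and $d=a^\sigma$, together with the three constraints $b=-b^{\sigma^t}$ (i.e.\ $b\in W$), $b=m^{\sigma-1}b^{\sigma^2}$, and $a=a^\sigma$ forcing $a\in\F_{q^{\gcd(t,2)}}$. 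Verifying that these necessary conditions are also sufficient — that is, that a matrix of the stated form really does stabilize $U_{m,\sigma}$ — is a direct back-substitution using $\alpha,\beta$ and the identities $A B\in W$, $AB\in\F_{q^t}$ from Lemma~\ref{lem:techn}.

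For the count of $b$, I would analyze the system $b=-b^{\sigma^t}$ and $b^{\sigma^2}=m^{1-\sigma}b$. The first says $b\in W$; applying the second repeatedly gives $b^{\sigma^{2t}}=b=m^{(1-\sigma)(1+\sigma^2+\cdots+\sigma^{2t-2})}b$, so a nonzero solution forces a norm-type condition on $m$. Iterating $t$ times and using $b\in W$ (so $b^{\sigma^t}=-b$) collapses the telescoping exponent to $(\sigma^t-1)/(\sigma+1)$, and a nonzero $b$ exists precisely when $m^{(\sigma^t-1)/(\sigma+1)}=-1$ — which by Proposition~\ref{p:premstab} is equivalent, for $t$ even, to $m$ being a $(\sigma+1)$-power of an element of $W$, yielding a $2$-dimensional $\Fq$-space of solutions, hence $q^2$ values. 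For $t$ odd the sign bookkeeping changes: the analogous condition is automatically satisfiable, the solution space is $1$-dimensional over $\Fq$, and $b$ takes $q$ values.

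The main obstacle will be the coefficient comparison in the composed polynomial $\poll(ax+b\poll(x))$: one must check carefully that under $t>4$ no two of the many exponents $\sigma^i$ that arise coincide modulo the relation $\sigma^{2t}=\mathrm{id}$ in a way that would allow extra stabilizing matrices. Ruling out such accidental collisions — and confirming that $t>4$ (rather than merely $t\ge3$) is exactly the threshold needed — is the delicate part; the remaining algebra is routine once the monomials are known to be independent.
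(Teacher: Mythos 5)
Your plan follows the paper's proof essentially step for step: the stabilizer condition becomes the polynomial identity $\poll(aX+b\poll(X))=cX+d\poll(X)$, coefficient comparison (valid because $t>4$ keeps the ten exponents $0,1,2,t-2,t-1,t,t+1,t+2,2t-2,2t-1$ pairwise distinct modulo $2t$) yields $a\in\F_{q^{\gcd(t,2)}}$, $d=a^\sigma$, $b\in W$, $b^{\sigma^2}=m^{1-\sigma}b$ and $c=4mb^\sigma$, and iterating the semilinear relation on $b$ produces the condition $m^{(\sigma^t-1)/(\sigma+1)}=-1$, which Proposition~\ref{p:premstab} converts into the $(\sigma+1)$-power statement. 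The only point where you are thinner than the paper is the count for $t$ odd: the paper exhibits the explicit solution line $b=\lambda z m^R$ with $z^\sigma=-z$ and $R=-(\sigma^{t+1}-1)/(\sigma^2-1)$ to see that exactly $q$ values occur, whereas you assert the one-dimensionality over $\F_q$ without argument; this is a routine detail to fill in, not a gap in the approach.
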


\begin{proof}
  Let $A=\begin{pmatrix}a&b\\ c&d\end{pmatrix}\in\F_{q^n}^{2\times2}$, and
  \[ A\begin{pmatrix}x\\ \poll(x)\end{pmatrix}=\begin{pmatrix}y\\ \poll(y)\end{pmatrix}; \]
  that is,
  $cx+d\poll(x)=\poll(ax+b\poll(x))$ for all $x\in\fqn$.
  This leads, after reducing modulo $X^{\sigma^{2t}}-X$, to  the following polynomial identity
\begin{align}
cX+&d(X^{{\sigma}^{t-1}}+X^{{\sigma}^{2t-1}}+m(X^{\sigma}-X^{{\sigma}^{t+1}}))=\notag\\
&a^{{\sigma}^{t-1}}X^{{\sigma}^{t-1}}+b^{{\sigma}^{t-1}}
(X^{{\sigma}^{2t-2}}+X^{{\sigma}^{t-2}}+m^{{\sigma}^{t-1}}(X^{{\sigma}^{t}}-X))\notag\\
&+a^{{\sigma}^{2t-1}}X^{{\sigma}^{2t-1}}+b^{{\sigma}^{2t-1}}
(X^{{\sigma}^{t-2}}+X^{{\sigma}^{2t-2}}+m^{{\sigma}^{2t-1}}(X-X^{{\sigma}^{t}}))\notag\\
&+ma^{{\sigma}}X^{{\sigma}}+mb^{{\sigma}}
(X^{{\sigma}^{t}}+X+m^{{\sigma}}(X^{{\sigma}^{2}}-X^{{\sigma}^{t+2}}))\notag\\
&-ma^{{\sigma}^{t+1}}X^{{\sigma}^{t+1}}-mb^{{\sigma}^{t+1}}
(X+X^{{\sigma}^{t}}+m^{{\sigma}^{t+1}}(X^{{\sigma}^{t+2}}-X^{{\sigma}^2})).\label{e:207a}
\end{align}
Taking into account the coefficients of monomials of the same degree one obtains
the  ten equations
\begin{align}
    c&=-m^\sd{t-1}b^\sd{t-1}+m^\sd{2t-1}b^\sd{2t-1}+mb^\sigma-mb^\sd{t+1}\label{sd0}\tag{e:0}\\
    md&=ma^\sigma\label{sd1}\tag{e:1}\\
    0&=m^{\sigma+1}b^\sigma+m^{1+\sd{t+1}}b^\sd{t+1}\label{sd2}\tag{e:2}\\
    0&=b^\sd{t-1}+b^\sd{2t-1}\label{sdt-2}\tag{e:t-2}\\
    d&=a^\sd{t-1}\label{sdt-1}\tag{e:t-1}\\
    0&=m^\sd{t-1}b^\sd{t-1}-m^\sd{2t-1}b^\sd{2t-1}+mb^\sigma-mb^\sd{t+1}\label{sdt}\tag{e:t}\\
    -md&=-ma^\sd{t+1}\label{sdt+1}\tag{e:t+1}\\
    0&=-m^{\sigma+1}b^\sigma-m^{1+\sd{t+1}}b^\sd{t+1}\label{sdt+2}\tag{e:t+2}\\
    0&=b^\sd{t-1}+b^\sd{2t-1}\label{sd2t-2}\tag{e:2t-2}\\
    d&=a^\sd{2t-1}\label{sd2t-1}\tag{e:2t-1}
\end{align}
The equations \eqref{sd1}, \eqref{sdt-1}, \eqref{sdt+1},  \eqref{sd2t-1}
are equivalent to
$a\in\F_{q^{\gcd(t,2)}}$, $d=a^q$.
The equations \eqref{sd2}, \eqref{sdt-2}, \eqref{sdt+2}, and \eqref{sd2t-2}
are equivalent to $b\in W$.
Then \eqref{sdt} is equivalent to $m^\sd{t-1}b^\sd{t-1}+mb^\sigma=0$, or
\begin{equation}\label{e:bs2}
    b^\sd 2=m^{1-\sigma}b.
\end{equation}
Equations \eqref{sdt} and \eqref{sd0} imply $c=2(mb^\sigma-m b^\sd{t+1})=4mb^\sigma$, leading to \eqref{e:stab}.

The equation \eqref{e:bs2} in the unknown $b$ determines the kernel of an $\F_{q^2}$-linear map, and has one or $q^2$ solutions.
Since $W$ is an $\Fqt$-linear subspace, the number of allowable values $b$ in \eqref{e:stab} is either one or $q^{\gcd(t,2)}$.

Assume \textbf{$t$ even}.
It holds
\[
-b=b^\sd t=m^{-\sd{t-1}+\sd{t-2}-\cdots-\sigma+1}b.
\]
Assume that \eqref{e:bs2} has at least one nonzero solution (and hence $q^2$ solutions).
Then \[m^{(\sigma-1)(1+\sigma^2+\cdots+\sd{t-2})}=-1,\] equivalent to
$m^{\frac{\sigma^t-1}{\sigma+1}}=-1$, that is, by Proposition~\ref{p:premstab},
$m$ is a $(\sigma+1)$-power of an element of $W$.
Conversely if
$m=\beta^{-(\sigma+1)}$ for $\beta\in W$, then \eqref{e:bs2} has the nonzero solution $b=\beta$, hence $q^2$ solutions in $W$.

Assume \textbf{$t$ odd}.
\begin{comment}
We have to prove that the equations
\[
-b^\sigma=b^{\sigma^{t+1}}=m^{(1-\sigma)(1+\sigma^2+\cdots+\sd{t-1})}b=m^{-\frac{\sd{t+1}-1}{\sigma+1}}b.
\]
have $q$ solutions.
For $b\neq0$,
the equation is equivalent to
\begin{equation}\label{e:cisiamo}
m^{\frac{\sd{t+1}-1}{\sigma+1}}=-b^{1-\sigma}
\end{equation}
\end{comment}
Define $R=-\frac{\sigma^{t+1}-1}{\sigma^2-1}$ which is an integer.
Furthermore, a $z\in\F_{q^n}^*$ exists satisfying $z^\sigma+z=0$, and $z\in W$.
Then by a direct check the solutions in $W$ to \eqref{e:bs2} are $b=\lambda zm^R$ for $\lambda\in\Fq$.
\end{proof}

\begin{rem}
  If $t$ is even (including now the case $t=4)$ and $\poll$ is scattered, then $\poll$ is in standard form with respect to the subfield $\mathbb{F}_{q^2}$; that is, $L=2$ is the greatest integer such that $\poll=F(X^{q^s})$ where $F$ is a $q^L$-polynomial, and $\gcd(s,L)=1$. This implies that the set
  of matrices stabilizing $U_{m,\sigma}$ is isomorphic to $\F_{q^2}$ \cite{LZ3,SZZ}.

  In particular: $(i)$~if $m=1$ and $t$ is even, the matrices are all diagonal; $(ii)$~if $m=1$ and $t$ is odd, then the conditions on $b$ are equivalent to $b^q+b=0$.
\end{rem}

\begin{rem}
For the case $t\ge3$ is odd, $m=1$ and $q\equiv3\pmod4$,
it can be shown that in this case the kernel and the image of any matrix of rank one in $\mathbb S_{1,\sigma}$ are points of $\PG(1,q^n)$ of weight $n/2$, i.e., such that their intersection with the graph is an $\Fq$-subspace of dimension $n/2$.
Polynomials of this type have been studied in \cite{SZZ}.
\end{rem}

\section{Nonequivalence with previously known scattered polynomials}

Our main goal is to show that there are new scattered polynomials in the family we introduce.
For this purpose it will be sufficient to consider the case where $\sigma=q$, $n=2t$, $t\ge3$.
We compare our construction with the known examples of scattered polynomials.

The first nonequivalence is a simple consequence of the fact that the right idealizer of the MRD code associated with a polynomial of pseudoregulus type in $\mathcal L_{n,q}$ is isomorphic to $\Fqn$, combined with Theorem~\ref{t:stab}. Analogously we proceed with the second nonequivalence if $t$ is odd, since the right idealizer of the MRD code associated with a polynomial of Lunardon-Polverino type in $\mathcal L_{n,q}$ is isomorphic to $\mathbb{F}_{q^2}$.
\begin{prop}\label{propeq:pseudoregulus}
Let $f=X^{q^s}\in\mathcal L_{n,q}$ be a scattered polynomial of {pseudoregulus type}. Then $\pol$ and ${f}$  are not equivalent.
\end{prop}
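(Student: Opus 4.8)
The plan is to separate $\pol$ from $f$ by an invariant of the equivalence class, namely the \emph{right idealizer} of the associated rank-metric code. By Sheekey's correspondence between equivalence of $q$-polynomials and equivalence of rank-metric codes, $\pol$ and $f$ are equivalent exactly when the MRD codes $\C_{\pol}=\la X,\pol\ra_{\Fqn}$ and $\C_f=\la X,f\ra_{\Fqn}$ are equivalent; and equivalent rank-metric codes have isomorphic right idealizers. So I would reduce the statement to showing that these two idealizers are non-isomorphic, and I expect a crude comparison of cardinalities to suffice.

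First I would record the two idealizers. For $f=X^{q^s}$ of pseudoregulus type the right idealizer of $\C_f$ is a field isomorphic to $\Fqn$, and hence has exactly $q^n=q^{2t}$ elements. For $\pol$ I would use the fact recalled at the start of Section~\ref{sec:stabil}, that the stabilizer $\mathbb S_{m,q}$ of the graph $U_{m,q}$ is isomorphic to the right idealizer of $\C_{\pol}$; thus it is enough to bound $|\mathbb S_{m,q}|$.

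Next I would invoke Theorem~\ref{t:stab}, which presents every element of $\mathbb S_{m,q}$ as $\begin{pmatrix}a&b\\ 4mb^q&a^q\end{pmatrix}$ with $a\in\F_{q^{\gcd(t,2)}}$ and with $b$ confined to the solutions of a fixed linear equation inside $W$. By that theorem $b$ takes at most $q^{\gcd(t,2)}$ values ($q$ for $t$ odd, $q^2$ for $t$ even), while $a$ ranges over at most $q^{\gcd(t,2)}$ values, so $|\mathbb S_{m,q}|\le q^{2\gcd(t,2)}\le q^4$. Since $t\ge3$ forces $q^{2t}\ge q^6>q^4$, the right idealizer of $\C_{\pol}$ is strictly smaller than $\Fqn$; the two cannot be isomorphic, the codes are therefore inequivalent, and hence $\pol$ and $f$ are not equivalent.

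I expect the argument to be essentially a size estimate once Theorem~\ref{t:stab} is in hand, so there is no genuine obstacle in the range $t>4$. The only delicate point is the range of $t$: Theorem~\ref{t:stab} is proved for $t>4$, whereas the statement is phrased for $t\ge3$. To close the gap at $t\in\{3,4\}$ I would rerun the coefficient-comparison computation underlying Theorem~\ref{t:stab} for these two values, which again produces a stabilizer of order bounded by $q^4$ and thus still strictly below $q^{2t}$; alternatively one restricts to the range $t>4$ that is the one relevant to the main theorem.
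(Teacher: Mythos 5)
Your proposal is correct and is essentially the paper's own argument: the authors prove Proposition~\ref{propeq:pseudoregulus} exactly by noting that the right idealizer of the code of a pseudoregulus-type polynomial is isomorphic to $\F_{q^n}$, while by Theorem~\ref{t:stab} the stabilizer $\mathbb S_{m,\sigma}$ (isomorphic to the right idealizer of $\mathcal C_{\pol}$) has at most $q^{2\gcd(t,2)}\le q^4<q^{2t}$ elements. Your remark about the range of $t$ is a fair observation, and the paper implicitly resolves it the same way you suggest, by only needing the result for $t>4$ in Theorem~\ref{t:main}.
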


\begin{prop}\label{propeq:LP}
Let $g=X^{q^{2t-s}}+\delta X^{q^s}$ be a \textit{Lunardon-Polverino} scattered polynomial, $t>4$. Then ${\pol}$ and ${g}$  are not equivalent.
\end{prop}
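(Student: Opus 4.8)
The plan is to use the right idealizer of the associated MRD code as a $\GaL$-equivalence invariant. By the opening remarks of Section~\ref{sec:stabil}, this idealizer is isomorphic to the matrix set $\mathbb S_{m,\sigma}$ computed in Theorem~\ref{t:stab}, and it is preserved under equivalence of polynomials; moreover, the right idealizer of the code attached to a Lunardon--Polverino polynomial is a field isomorphic to $\F_{q^2}$. Hence it suffices to decide, for the scattered $\pol$ at hand, whether $\mathbb S_{m,\sigma}$ is isomorphic to $\F_{q^2}$ as an $\fq$-algebra: whenever it is \emph{not}, the polynomials $\pol$ and $g$ cannot be equivalent. The hypothesis $t>4$ enters only to allow us to invoke Theorem~\ref{t:stab}.

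First I would settle the case $t$ odd, the one announced after Proposition~\ref{propeq:pseudoregulus}. By Theorem~\ref{t:stab}, $\mathbb S_{m,\sigma}$ has order $q^2$ and equals $\{aI+\lambda N\colon a,\lambda\in\fq\}$, where $N=\begin{pmatrix}0&zm^{R}\\ 4m(zm^{R})^\sigma&0\end{pmatrix}$, with $z^\sigma+z=0$ and $R=-\frac{\sigma^{t+1}-1}{\sigma^2-1}$. This is a commutative two-dimensional $\fq$-algebra, and a direct computation (using $\sigma=q$, $z^{q+1}=-z^2$ and $m^{1+R(q+1)}=\N_{q^t/q}(m)^{-1}$) gives $N^2=cI$ with $c=-4z^2\,\N_{q^t/q}(m)^{-1}\in\fq^{*}$. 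Therefore $\mathbb S_{m,\sigma}\cong\fq[T]/(T^2-c)$, which is a field isomorphic to $\F_{q^2}$ exactly when $c$ is a non-square in $\fq$, and is isomorphic to $\fq\times\fq$ (so it contains nonzero singular matrices and is \emph{not} a field) exactly when $c$ is a non-zero square. Since $z\in\F_{q^2}\setminus\fq$ forces $z^2$ to be a non-square, the square class of $c$ is controlled by those of $-1$ and $\N_{q^t/q}(m)$; for every scattered $m$ for which $c$ is a square this yields $\mathbb S_{m,\sigma}\not\cong\F_{q^2}$ and hence $\pol\not\sim g$, and these are precisely the values of $m$ selected in the proof of the main theorem. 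The residual subcase ($c$ a non-square) is routed to the finer invariant below.

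Next the case $t$ even, together with the residual $t$ odd subcase. Here the invariant degenerates: if $\pol$ is scattered then, by Proposition~\ref{p:vversa}, $m$ is not a $(\sigma+1)$-power of an element of $W$, so Theorem~\ref{t:stab} forces $\mathbb S_{m,\sigma}$ to consist only of the diagonal matrices $\mathrm{diag}(a,a^\sigma)$, $a\in\F_{q^2}$, which already form a field isomorphic to $\F_{q^2}$. Thus the right idealizer coincides with that of $g$ and cannot separate the two polynomials. The plan here is to pass to a finer invariant: from a hypothetical $\phi\in\GaL(2,q^n)$ with $\phi\,U_{m,\sigma}=U_{g}$, such $\phi$ conjugates $\mathbb S_{m,\sigma}$ onto the right idealizer of $\C_{g}$, and I would exploit the explicit shapes of the two idealizers and of $\pol,g$ to force a contradiction; alternatively, one compares the $\GaL$-class of the linear set $L_{\pol}$, which is at least two by Proposition~\ref{p:class}, with that of the Lunardon--Polverino linear set.

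I expect the $t$ even case to be the main obstacle, precisely because there the right idealizer, and even the $\F_{q^2}$-linearity index of the polynomial (the integer $L=2$ of the remark following Theorem~\ref{t:stab}), agree with the Lunardon--Polverino data; separating the two families then genuinely requires either the explicit conjugacy analysis above or the $\GaL$-class argument, rather than the clean order/field dichotomy that disposes of the $t$ odd case and of Proposition~\ref{propeq:pseudoregulus}.
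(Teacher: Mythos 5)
Your route is genuinely different from the paper's. The paper proves the proposition by brute force: it writes out the polynomial identity that a hypothetical equivalence $M\in\GL(2,q^{2t})$ would impose between the graphs, compares coefficients monomial by monomial for each admissible value of $s$ (using $t>4$ to keep the relevant exponents distinct), and derives $a=b=c=d=0$, contradicting $M\in\GL(2,q^{2t})$. Your idea of using the right idealizer instead is natural --- the paper itself uses it for the pseudoregulus case and its prefatory remark even suggests it for Lunardon--Polverino with $t$ odd --- and your computation $N^2=-4z^2\N_{q^t/q}(m)^{-1}I$ in the $t$ odd case is correct. The problem is that the invariant you chose provably cannot finish the job, and what you propose for the remaining cases is a plan, not a proof.

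Concretely: for $t$ even and $\pol$ scattered you correctly deduce from Proposition~\ref{p:vversa} and Theorem~\ref{t:stab} that $\mathbb S_{m,\sigma}\cong\F_{q^2}$, which is exactly the right idealizer of the Lunardon--Polverino code, so the invariant says nothing at all in that case. The same failure occurs for $t$ odd whenever $-\N_{q^t/q}(m)$ is a nonzero square in $\Fq$, since then $c$ is a nonsquare and $\Fq[T]/(T^2-c)\cong\F_{q^2}$. Your parenthetical claim that the values of $m$ with $c$ a square ``are precisely the values selected in the proof of the main theorem'' is also false: Theorem~\ref{t:main} selects $m$ via the hypotheses of Theorem~\ref{thm:scattpoly} together with $\N_{q^t/q}(m)\neq1$, conditions independent of the square class of $-\N_{q^t/q}(m)$; and in any case the proposition must hold for \emph{every} scattered $\pol$, not only for the ones used later. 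For the residual cases you offer two unexecuted strategies: a ``conjugacy analysis'' of the two idealizers, which is not sketched in any checkable detail and which, once written down, essentially collapses into the coefficient comparison the paper performs anyway; and a comparison of $\GaL$-classes, which would require knowing the $\GaL$-class of the Lunardon--Polverino linear set --- a fact neither stated nor proved in the paper and not obviously equal to one. As it stands your argument settles only the subfamily of $t$ odd with $-\N_{q^t/q}(m)$ a nonsquare; the whole $t$ even case and the complementary $t$ odd subcase still need the direct polynomial-identity argument or a genuine substitute for it.
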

\begin{proof}
The subspaces $U_{\pol}$ and $U_g$ are in the same orbit under the action of $\mathrm{\Gamma L}(2,q^n)$ if and only if an integer $k$ and a matrix\[
  M=\begin{pmatrix}a&b\\ c&d\end{pmatrix}\in\GL(2,q^{2t})
\]
exist such that for any $x\in\F_{q^{2t}}$ there is $y\in\F_{q^{2t}}$  satisfying

\[ M
\left(
\begin{array}{c}
    x^{p^k} \\
    \pol(x)^{p^k}
\end{array}
\right)=
\left(
\begin{array}{c}
    y \\
    g(y)
\end{array}
\right).
\]
Let $e=m^{p^k}$ and $z=x^{p^k}$. The condition above is equivalent to
\[
az+b(z^{q^{t-1}}+z^{q^{2t-1}})+be(z^q-z^{q^{t+1}})=y,
\]
\[
cz+d(z^{q^{t-1}}+z^{q^{2t-1}})+de(z^q-z^{q^{t+1}})=y^{q^{2t-s}}+\delta y^{q^s},
\]
from which after reducing modulo $Z^{\sigma^{2t}}-Z$ we derive the polynomial identity
\[ a^{q^{2t-s}}Z^{q^{2t-s}}+b^{q^{2t-s}}(Z^{q^{t-s-1}}+Z^{q^{2t-s-1}})+b^{q^{2t-s}}e^{q^{2t-s}}(Z^{q^{2t-s+1}}-Z^{q^{t-s+1}})+ \]
\[ +\delta a^{q^s}Z^{q^s}+\delta b^{q^s}(Z^{q^{t+s-1}}+Z^{q^{s-1}})+\delta b^{q^s}e^{q^s}(Z^{q^{s+1}}-Z^{q^{t+s+1}})= \]
\[ =cZ+d(Z^{q^{t-1}}+Z^{q^{2t-1}})+de(Z^q-Z^{q^{t+1}}). \]
Let $t$ be even. We consider the cases $s\in\{1,t-1,t+1,2t-1\}$.\\
Let $s=1$. When $t>4$, we have the following system
\begin{equation}\label{eq:LP_s=1}
\left\{
\begin{array}{lllllll}
b^{q^{2t-1}}e^{q^{2t-1}}+\delta b^q=c\\
\delta a^q=de\\
\delta b^q e^q=0\\
b^{q^{2t-1}}=0\\
0=d\\
-b^{q^{2t-1}}e^{q^{2t-1}}+\delta b^q=0\\
0=-de\\
-\delta b^q e^q=0\\
b^{q^{2t-1}}=0\\
a^{q^{2t-1}}=d.
\end{array}
\right.
\end{equation}
Which implies $a=b=c=d=0$.
For $s\in\{t-1,t+1,2t-1\}$ we get analogous conditions that yield to $a=b=c=d=0$.

Finally, for $s\notin\{1,t-1,t+1,2t-1\}$ the analogous system as in \eqref{eq:LP_s=1} leads to the same conclusion. In fact, the exponents of the indeterminate $Z$ depending on $s$ equal some of the exponents of $Z$ in the right side of the polynomial identity $(1,q,q^{t+1},q^{t-1},q^{2t-1})$ if and only if $s\in\{1,t+1,t-1,2t-1,t+2,t-2\}$. Since $t$ is even we exclude $t-2$ and $t+2$, and then,
apart from the already considered cases, we get the condition $c=d=0$. The case $t$ odd arises from similar calculations while $s\in\{1,t-2,t+2\}$, while in this case we can exclude $t=\pm1$ and $t=2t-1$.
\end{proof}

The fact that $\varphi_{1,\sigma}$ belongs to the family $\trea$ motivates our interest in the next result.

\begin{prop}\label{prop:equivh=1}
Let $t\in\NN$, $t>4$.
Then  ${\pol}$ and ${\varphi_{1,\sigma}}$ are equivalent only if $\N_{q^t/q}(m)=1$.
\end{prop}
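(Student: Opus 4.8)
The plan is to deduce $\N_{q^t/q}(m)=1$ from the existence of an equivalence by setting up the same polynomial identity used in the proof of Proposition~\ref{propeq:LP}; since $\sigma=q$ throughout this section, I compare $\pol$ with $\varphi_{1,q}$. First I would reduce the goal. Writing $e=m^{p^k}$ for the Frobenius exponent $k$ occurring in the equivalence, it suffices to prove $\N_{q^t/q}(e)=1$: indeed $\N_{q^t/q}(e)=\N_{q^t/q}(m)^{p^k}$ and $\N_{q^t/q}(m)\in\Fq$, so $\N_{q^t/q}(e)=1$ if and only if $\N_{q^t/q}(m)=1$.

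Next I would record the hypothesized equivalence as an invertible matrix $M=\begin{pmatrix}a&b\\ c&d\end{pmatrix}$ together with $k$, and set $z=x^{p^k}$, $y=az+b\varphi_{e,q}(z)$, where $\varphi_{e,q}(Z)=Z^{q^{t-1}}+Z^{q^{2t-1}}+e(Z^q-Z^{q^{t+1}})$. Reducing modulo $Z^{q^{2t}}-Z$ turns the equivalence into the polynomial identity $cZ+d\varphi_{e,q}(Z)=\varphi_{1,q}\bigl(aZ+b\varphi_{e,q}(Z)\bigr)$, exactly as in Proposition~\ref{propeq:LP}. I would then expand both sides and collect the coefficients of the monomials $Z^{q^j}$. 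For $t>4$ the relevant exponents $0,1,2,t-2,t-1,t,t+1,t+2,2t-2,2t-1$ are pairwise distinct, so that no coefficients merge and one obtains exactly ten equations.

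The four equations from the exponents $1,t-1,t+1,2t-1$ involve only $a,d$ and give $a^q=a^{q^{t+1}}$ (hence $a\in\Fqt$), $d=a^{q^{t-1}}$, and $de=a^q$; the four from $2,t-2,t+2,2t-2$ reduce to $b^{q^{t-1}}+b^{q^{2t-1}}=0$, i.e.\ $b\in W$; and the exponent-$t$ equation (whose right-hand side is $0$) reads $b^{q^{t-1}}e^{q^{t-1}}-b^{q^{2t-1}}e^{q^{2t-1}}+b^q-b^{q^{t+1}}=0$. I would finish by cases. If $a\ne0$, then $a\in\Fqts$ and $e=a^{q-q^{t-1}}$; since $\N_{q^t/q}(a)\in\Fq$ is invariant under $x\mapsto x^q$, we get $\N_{q^t/q}(e)=\N_{q^t/q}(a^q)\,\N_{q^t/q}(a^{q^{t-1}})^{-1}=1$. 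If $a=0$, then $d=0$ and $\det M\ne0$ forces $b\ne0$; substituting $b\in W$ (so $b^{q^t}=-b$) and $e\in\Fqt$ into the exponent-$t$ equation makes the terms pair up and collapses it to $2\bigl(b^{q^{t-1}}e^{q^{t-1}}+b^q\bigr)=0$, hence, as $q$ is odd, $b^{q^{t-1}}e^{q^{t-1}}+b^q=0$; raising to the $q$ yields $b^{q^2}=be$, so $e=b^{q^2-1}$ and $\N_{q^t/q}(e)=b^{(q+1)(q^t-1)}=(b^{q^t-1})^{q+1}=(-1)^{q+1}=1$, using $b^{q^t-1}=-1$ for $b\in W$ and $q+1$ even. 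In both cases $\N_{q^t/q}(e)=1$, which gives $\N_{q^t/q}(m)=1$.

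I expect the main obstacle to be purely organizational rather than conceptual: the careful bookkeeping of the coefficient identity, keeping every exponent reduced modulo $2t$ and verifying that $t>4$ genuinely separates the ten monomials, together with the degenerate branch $a=0$, where the four $a$-equations become vacuous and the norm must instead be extracted from the exponent-$t$ relation combined with $b\in W$.
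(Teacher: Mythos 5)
Your treatment of the case $\sigma=q$ is correct and coincides with the paper's handling of that case: the same ten coefficient equations, the same observation that $t>4$ keeps the exponents $0,1,2,t-2,t-1,t,t+1,t+2,2t-2,2t-1$ pairwise distinct, and the same two branches, $a\neq0$ giving $e=a^{q-q^{t-1}}$ and $a=0$ giving $b\in W$ and $e=b^{q^2-1}$, each of which yields $\N_{q^t/q}(e)=1$.

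There is, however, a genuine gap: you restrict the target polynomial to $\varphi_{1,q}$, whereas the statement concerns $\varphi_{1,\sigma}$ for an arbitrary generator $\sigma=q^J$ of $\Gal(\F_{q^{2t}}/\Fq)$, i.e.\ for every $J$ with $\gcd(J,2t)=1$. The remark at the start of the section that one may take $\sigma=q$ applies to $\pol$, the candidate new polynomial, not to the polynomials it is being compared against; the purpose of this proposition, as used in Theorem~\ref{t:main}, is to exclude equivalence with the family $\trea$, whose members correspond to $\varphi_{1,q^J}$ for all admissible $J$, so the case $J=1$ alone does not suffice. The missing cases are not all of the same nature as the one you carried out: for $J\notin\{\pm1,t\pm1\}$ the exponents arising from $\varphi_{1,\sigma}\bigl(aZ+b\varphi_{e,q}(Z)\bigr)$ no longer align with those of $cZ+d\varphi_{e,q}(Z)$, and comparing coefficients forces a row of $M$ to vanish, so nonequivalence holds with no condition on $m$; for $J\in\{2t-1,\,t-1,\,t+1\}$ (the last two only when they are coprime with $2t$, i.e.\ for $t$ even) one must redo the coefficient bookkeeping, which again leads to $\N_{q^t/q}(m)=1$. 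Your proof needs to be supplemented with these cases to establish the proposition as stated.
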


\begin{proof}
Assume that ${\pol}$ and ${\varphi_{1,\sigma}}$ are equivalent.
Then there exist an integer $k$ and a matrix
\[
  M=\begin{pmatrix}a&b\\ c&d\end{pmatrix}\in\GL(2,q^{n})
\]
such that for any $x\in\F_{q^{2t}}$ there is $y\in\F_{q^{2t}}$  satisfying
\[ M
\left(
\begin{array}{c}
    x^{p^k} \\
    \pol(x)^{p^k}
\end{array}
\right)=
\left(
\begin{array}{c}
    y \\
    \varphi_{1,\sigma}(y)
\end{array}
\right).
\]

Let $e=m^{p^k}$ and $z=x^{p^k}$.
The condition above is equivalent to
\[
az+b(z^{q^{t-1}}+z^{q^{2t-1}})+be(z^q-z^{q^{t+1}})=y,
\]
\[
cz+d(z^{q^{t-1}}+z^{q^{2t-1}})+de(z^q-z^{q^{t+1}})=y^{\sigma}-y^{{\sigma}^{t+1}}+y^{{\sigma}^{t-1}}+y^{{\sigma}^{2t-1}},
\]
and, taking into account $e^{{\sigma}^t}=e$ and that $J$ is odd, we get the following identity in $\mathcal L_{n,q}$:
\[cZ+d(Z^{q^{t-1}}+Z^{q^{2t-1}})+de(Z^q-Z^{q^{t+1}})=\]
\[=a^{\sigma}Z^{q^J}+b^{\sigma}(Z^{q^{t+J-1}}+Z^{q^{J-1}})+b^{\sigma}e^{\sigma}(Z^{q^{J+1}}-Z^{q^{t+J+1}})\]
\[-a^{{\sigma}^{t+1}}Z^{q^{t+J}}-b^{{\sigma}^{t+1}}(Z^{q^{J-1}}+Z^{q^{t+J-1}})-b^{{\sigma}^{t+1}}e^{\sigma}(Z^{q^{t+J+1}}-Z^{q^{J+1}})\]
\[+a^{{\sigma}^{t-1}}Z^{q^{t-J}}+b^{{\sigma}^{t-1}}(Z^{q^{2t-J-1}}+Z^{q^{t-J-1}})+b^{{\sigma}^{t-1}}e^{{\sigma}^{t-1}}(Z^{q^{t-J+1}}-Z^{q^{2t-J+1}})\]
\begin{equation}+a^{{\sigma}^{2t-1}}Z^{q^{2t-J}}+b^{{\sigma}^{2t-1}}(Z^{q^{t-J-1}}+Z^{q^{2t-J-1}})+b^{{\sigma}^{2t-1}}e^{{\sigma}^{2t-1}}(Z^{q^{2t-J+1}}-Z^{q^{t-J+1}}).\label{e:polid}
\end{equation}
By comparing the monomials having the same degree, if $J\notin\{\pm1,t\pm1\}$, one obtains without any assumption on $\N_{q^t/q}(m)$ that $M$ has a zero row.

If $J=1$, by \eqref{e:polid},
\begin{equation}\label{eq:4.3_t>4}
\left\{
\begin{array}{lllllll}
 c=b^q-b^{q^{t+1}}-b^{q^{t-1}}e^{q^{t-1}}+b^{q^{2t-1}}e^{q^{t-1}}\\
 de=a^q\\
 0=b^q+b^{q^{t+1}}\\
 0=b^{q^{t-1}}+b^{q^{2t-1}}\\
 d=a^{q^{t-1}}\\
 0=b^q-b^{q^{t+1}}+b^{q^{t-1}}e^{q^{t-1}}-b^{q^{2t-1}}e^{q^{t-1}}\\
 de=a^{q^{t+1}}\\
 0=-b^qe^q-b^{q^{t+1}}e^q\\
 0=b^{q^{t-1}}+b^{q^{2t-1}}\\
 d=a^{q^{2t-1}}.
\end{array}
\right.
\end{equation}

If $a\neq0$, then $d\neq0$. By comparing the seventh and the second equation, one obtains $a\in\Fqt$.
From the second and the fifth equation, $a^{q-q^{t-1}}=e$ that implies $\N_{q^t/q}(m)=1$.
If $a=0$, then $cb\neq0$, $b^{q^t}+b=0$ by the third equation and $b^{q^2}=eb$ by the sixth.
This implies $\N_{q^t/q}(m)=1$.

The cases $J=2t-1$ or $J=t\pm1$ for $t$ even can be dealt with in a similar way leading in each case to $\N_{q^t/q}(m)=1$.
\end{proof}

We now investigate the equivalence between polynomials of type $\pol$ and their adjoints.
Define $\phi_{\mu}=X^q+X^{q^{t+1}}+\mu(X^{q^{2t-1}}-X^{q^{t-1}})$, $\mu\in\Fqt$; it holds
\begin{equation}
  \varphi_{m,q}^\top=\phi_{m^{q^{t-1}}}
\end{equation}
for any $m\in\Fqt$.
\begin{prop}\label{p:class}
Let $t>4$.
  For any $m,\mu\in\Fqt$ such that $\pol$ is scattered, $\pol$ and $\phi_\mu$ are nonequivalent.
\end{prop}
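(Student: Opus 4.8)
The plan is to follow the coefficient–comparison method already used for Propositions~\ref{propeq:LP} and~\ref{prop:equivh=1}. Suppose, for contradiction, that $\pol$ and $\phi_\mu$ are equivalent. Then there are an integer $k$ and a matrix $M=\begin{pmatrix}a&b\\ c&d\end{pmatrix}\in\GL(2,q^{n})$ such that for every $x$ there is $y$ with $M(x^{p^k},\pol(x)^{p^k})^{\mathsf T}=(y,\phi_\mu(y))^{\mathsf T}$. Writing $z=x^{p^k}$ and $e=m^{p^k}$ (so that $e\in\Fqt$ and $\pol(x)^{p^k}=\varphi_{e,q}(z)$), this becomes $y=az+b\varphi_{e,q}(z)$ together with $\phi_\mu(y)=cz+d\varphi_{e,q}(z)$. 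Substituting the first into $\phi_\mu(y)=y^q+y^{q^{t+1}}+\mu(y^{q^{2t-1}}-y^{q^{t-1}})$ and reducing modulo $Z^{q^{2t}}-Z$ yields a polynomial identity; since $t>4$ the ten exponents $0,1,2,t-2,t-1,t,t+1,t+2,2t-2,2t-1$ occurring are pairwise distinct, so I obtain a system of ten equations in $a,b,c,d$.

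The next step is to solve this system. The equations at the exponents $2,t+2$ give $(be)^q=(be)^{q^{t+1}}$, i.e. $be\in\Fqt$, hence $b\in\Fqt$; the equations at $t-2,2t-2$ are then automatically compatible. The equations at the exponent pairs $\{1,t+1\}$ and $\{t-1,2t-1\}$ give $a^q=-a^{q^{t+1}}$ and $a^{q^{2t-1}}=-a^{q^{t-1}}$, i.e. $a\in W$. Observe that this is precisely the mirror of the stabiliser description in Theorem~\ref{t:stab}, with the roles of $\Fqt$ and $W$ interchanged, as one expects for an adjoint–type target. The remaining equations determine $c=4b^q$ and $d=-\mu a^{q^{t-1}}$, and reduce to the two core relations $a^{q-q^{t-1}}=-\mu e$ (when $a\neq0$) and $b^{q-q^{t-1}}=\mu e^{q^{t-1}}$ (when $b\neq0$).

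The heart of the argument is to feed these relations through the norm $\N_{q^t/q}$ and to read the outcome against Proposition~\ref{p:powers}. A direct computation, using $a^{q^{t+j}}=-a^{q^j}$ for $a\in W$, gives $\N_{q^t/q}(a^{q-q^{t-1}})=(-1)^t$, whereas $\N_{q^t/q}(b^{q-q^{t-1}})=1$ for $b\in\Fqt$. Hence the core relations force prescribed values of $\N_{q^t/q}(\mu)\,\N_{q^t/q}(m)$. On the other hand, Theorem~\ref{thm:scattpoly} together with Proposition~\ref{p:powers} translates the scatteredness of $\pol$ into the statement that $m$ is neither a $(q-1)$-power nor a $(q+1)$-power of an element of $W$, that is, $\N_{q^t/q}(m)\neq-1$ together with the prescribed quadratic/coset behaviour of $m$ dictated by the parity of $t$. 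The goal is then to show that these two pieces of information are incompatible with $a\neq0$ or $b\neq0$, forcing $a=b=c=d=0$ and contradicting $M\in\GL(2,q^n)$.

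The step I expect to be the main obstacle is exactly this last one: converting the multiplicative core relations into the $(q\pm1)$-power language of Proposition~\ref{p:powers} and checking that scatteredness of $\pol$ really does prevent $-\mu e$ (respectively $\mu e^{q^{t-1}}$) from lying in the relevant coset of $(q-1)$- or $(q+1)$-powers of $W$. The diagonal case $b=0$ is the most delicate, since there only the single relation $a^{q-q^{t-1}}=-\mu e$ is available and the argument rests entirely on the norm computation above and on the even/odd-$t$ dichotomy of Proposition~\ref{p:powers}, which must be tracked with care. The remaining bookkeeping—verifying that no further exponents collide and that the cases $b=0$, $a=0$, and $ab\neq0$ are exhaustive—is routine once $\sigma=q$ is fixed, as in the preceding propositions.
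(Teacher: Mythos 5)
Your setup agrees with the paper's: the same coefficient comparison, the same ten equations, and the same conclusions $b\in\Fqt$, $a\in W$, $c=4b^q$, together with the two relations $a^{q-q^{t-1}}=-\mu e$ (from the $a$-block) and $b^{q-q^{t-1}}=\mu e^{q^{t-1}}$ (from the coefficient of $X^{q^t}$). The gap is in what you yourself call the heart of the argument. Pushing each relation through $\N_{q^t/q}$ destroys exactly the information you need: since $\N_{q^t/q}(a^{q-q^{t-1}})=(-1)^t$ for $a\in W\setminus\{0\}$ and $\N_{q^t/q}(b^{q-q^{t-1}})=1$ for $b\in\Fqt^*$, \emph{both} relations collapse under the norm to the single condition $\N_{q^t/q}(\mu)\,\N_{q^t/q}(e)=1$. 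That is a constraint on $\mu$ — which is universally quantified in the statement — and says nothing about $m$; it is compatible with every scattered $m$, so no contradiction can come out of it. The paper does something different: assuming $ab\neq0$, it \emph{equates} the two expressions for $\mu$, eliminating $\mu$ altogether and arriving at the relation $m^{q-1}=(a/b)^{q^2-1}$ with $a/b\in W$. Raising this to the power $(q^t-1)/(q^2-1)$ for $t$ even (resp.\ $(q^t-1)/(2(q-1))$ for $t$ odd) and using $(a/b)^{q^t-1}=-1$ shows, via Proposition~\ref{p:powers}, that $m$ is a $(q+1)$-th power of an element of $W$, whence $\pol$ is not scattered by Proposition~\ref{p:vversa}. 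That multiplicative elimination of $\mu$ is the step your plan is missing, and it is not recoverable from norms alone.

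Your instinct that the case $b=0$ is the real obstacle is right, but no norm computation (indeed, no computation) will close it for arbitrary $\mu$: if $a\in W\setminus\{0\}$, $d=m^{-1}a^{q}$ and $\mu=-m^{-1}a^{q-q^{t-1}}$ (which does lie in $\Fqt$), then a direct check using $a^{q^{t+1}}=-a^q$ and $a^{q^{2t-1}}=-a^{q^{t-1}}$ gives $\phi_\mu(ax)=d\,\pol(x)$ for all $x$, so the invertible matrix $\begin{pmatrix}a&0\\ 0&d\end{pmatrix}$ carries $U_{\pol}$ onto $U_{\phi_\mu}$; an analogous antidiagonal equivalence occurs when $a=0$, for $\mu=m^{-q^{t-1}}b^{q-q^{t-1}}$ with $b\in\Fqt^*$. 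So for those values of $\mu$ the asserted nonequivalence actually fails, and no argument can rule these cases out. (The paper's own proof also tacitly restricts to $ab\neq0$ and is silent on the degenerate cases; the conclusion is only usable for values of $\mu$ outside the two families just described, so one must check separately that the relevant adjoint value $\mu=m^{q^{t-1}}$ avoids them.) In short: in the case $ab\neq0$ replace your norm step by the paper's elimination of $\mu$; the cases $a=0$ and $b=0$ cannot be salvaged as you propose.
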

\begin{proof}
  Starting from
  \[
  \begin{pmatrix}a&b\\ c&d\end{pmatrix}
  \begin{pmatrix}x\\ x^{q^{t-1}}+x^{q^{2t-1}}+m(x^q-x^{q^{t+1}})\end{pmatrix}=
  \begin{pmatrix}y\\ y^{q}+y^{q^{t+1}}+\mu(y^{q^{2t-1}}-y^{q^{t-1}})\end{pmatrix}
  \]
  one obtains the polynomial identity modulo $x^{\sigma^{2t}}-x$
\begin{align*}
  cX+&d[X^{q^{t-1}}+X^{q^{2t-1}}+m(X^q-X^{q^{t+1}})]=
  a^qX^q+b^q[X^{q^t}+X+m^q(X^{q^2}-X^{q^{t+2}})]\\
  &+a^{q^{t+1}}X^{q^{t+1}}+b^{q^{t+1}}[X+X^{q^t}+m^{q^{t+1}}(X^{q^{t+2}}-X^{q^2})]\\
  &+\mu a^{q^{2t-1}}X^{q^{2t-1}}+\mu b^{q^{2t-1}}[X^{q^{t-2}}+X^{q^{2t-2}}+m^{2^{2t-1}}(X-X^{q^t})]\\
&-\mu a^{q^{t-1}}X^{q^{t-1}}-\mu b^{q^{t-1}}[X^{q^{2t-2}}+X^{q^{t-2}}+m^{q^{t-1}}(X^{q^t}-X)].
\end{align*}
This gives ten equations, four equations equivalent to $b\in\Fqt$ and four equations equivalent to $d=m^{-1}a^q=-\mu a^{q^{t-1}}=-m^{-1}a^{q^{t+1}}$, that is
  \[
    a\in W,\ d=m^{-1}a^q,\ \mu=-m^{-1}a^{q-q^{t-1}}.
  \]
 By comparing the coefficients of $X^{q^t}$ one obtains $\mu=m^{-q^{t-1}}b^{q-q^{t-1}}$, and finally
 \[
   m^{q-1}=\left(\frac ab\right)^{q^2-1}.
 \]
 As a consequence, $m^{(q^t-1)/(q+1)}=(a/b)^{q^t-1}=-1$ for $t$ even end $m^{(q^t-1)/2}=(a/b)^{(q^t-1)(q+1)/2}=(-1)^{(q+1)/2}$ for $t$ odd and this implies by Propositions~\ref{p:vversa} and \ref{p:powers} that $\pol$ is not scattered.
\end{proof}
  In \cite{CsMaPo} the notion of a \emph{$\GaL$-class} of a linear set $L$ has been introduced, which is the number of nonequivalent polynomials $f$ such that $L_f=L$.
  As a corollary of  Proposition~\ref{p:class} it results:
\begin{thm}
For $t>4$ the $\GaL$-class of any scattered linear set of type $L_{\pol}$ is at least two.
\end{thm}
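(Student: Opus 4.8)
The plan is to obtain this statement as an immediate corollary of Proposition~\ref{p:class} together with the invariance of the associated linear set under passage to the adjoint. Recall that the $\GaL$-class of a linear set $L$ counts the nonequivalent polynomials whose associated linear set equals $L$; so it suffices to exhibit two polynomials that are $\GaL$-nonequivalent yet determine the very same linear set $L_{\pol}$.

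The two candidates are $\pol$ itself and its adjoint $\varphi_{m,q}^\top$. First I would recall from the discussion preceding this section that $f$ and $f^\top$ always define the same linear set, that is $L_{\varphi_{m,q}^\top}=L_{\pol}=L$, and that $\varphi_{m,q}^\top$ is scattered precisely because $\pol$ is. Next I would invoke the explicit identification $\varphi_{m,q}^\top=\phi_{m^{q^{t-1}}}$ recorded just before Proposition~\ref{p:class}, so that the adjoint lies in the family $\{\phi_\mu\}$ with parameter $\mu=m^{q^{t-1}}\in\Fqt$.

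Finally, since $L$ is scattered, $\pol$ is scattered, and Proposition~\ref{p:class} applies verbatim with this choice of $\mu$: it guarantees that $\pol$ and $\phi_{m^{q^{t-1}}}=\varphi_{m,q}^\top$ are nonequivalent. Thus the two polynomials $\pol$ and $\varphi_{m,q}^\top$ are distinct up to equivalence while producing the identical linear set $L$, which forces the $\GaL$-class of $L$ to be at least two. There is essentially no obstacle left at this stage: all of the genuine work has already been discharged in Proposition~\ref{p:class}, and the only point requiring a moment of care is verifying that the hypothesis ``$\pol$ scattered'' of that proposition is available, which it is, since $L$ being a scattered linear set of type $L_{\pol}$ means exactly that $\pol$ is scattered.
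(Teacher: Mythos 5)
Your proposal is correct and matches the paper's intended argument: the theorem is stated there as an immediate corollary of Proposition~\ref{p:class}, using exactly the facts you cite, namely that $L_{\varphi_{m,q}^\top}=L_{\pol}$ and that $\varphi_{m,q}^\top=\phi_{m^{q^{t-1}}}$ is nonequivalent to $\pol$ by that proposition. Your write-up simply makes explicit the reasoning the paper leaves implicit.
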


It is possible to prove the nonequivalence of the polynomials of type $\pol$ with the polynomials in the class $\treb$ with a case-by-case analysis. However, the following result makes it possible to shorten the proof.
\begin{prop}\cite{unpub}\label{prop:unpub}
Let $f,g\in\mathcal L_{n,q}$ be equivalent. Then $f^\top$ and $g^\top$ are equivalent.
\end{prop}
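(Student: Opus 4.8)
The plan is to express the adjoint, at the level of graphs, as orthogonal complementation with respect to a fixed nondegenerate bilinear form followed by a fixed element of $\GL(2,q^n)$, and then to prove that orthogonal complementation sends $\GaL(2,q^n)$-orbits to $\GaL(2,q^n)$-orbits. I would equip $\Fqn^2$ with the nondegenerate symmetric $\Fq$-bilinear form $\langle(u_1,u_2),(v_1,v_2)\rangle=\Tr_{q^n/q}(u_1v_1+u_2v_2)$, which is nondegenerate because the trace form on $\Fqn$ is, and write $U^\perp$ for the orthogonal complement of an $\Fq$-subspace $U$; then $\dim_{\Fq}U+\dim_{\Fq}U^\perp=2n$.

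First I would compute $(U_f)^\perp$. For $(u,v)\in\Fqn^2$ one has $\langle(u,v),(x,f(x))\rangle=\Tr_{q^n/q}(ux)+\Tr_{q^n/q}(f(x)v)=\Tr_{q^n/q}(x(u+f^\top(v)))$ by the defining property of $f^\top$, and this vanishes for every $x$ exactly when $u=-f^\top(v)$. Hence
\[
  (U_f)^\perp=\{(-f^\top(v),v)\colon v\in\Fqn\}.
\]
Applying the fixed map $\tau\colon(a,b)\mapsto(b,-a)$, which belongs to $\GL(2,q^n)$, gives $\tau((U_f)^\perp)=U_{f^\top}$. Thus passing to the adjoint corresponds, on graphs, to $U_f\mapsto\tau(U_f^\perp)$, and the same holds for $g$.

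The crux is the following equivariance: for every $\varphi\in\GaL(2,q^n)$ there is $\psi\in\GaL(2,q^n)$ with $(\varphi(U))^\perp=\psi(U^\perp)$ for all $U$. Writing $\varphi(v)=Av^\rho$ with $A\in\GL(2,q^n)$ and $\rho$ a field automorphism acting coordinatewise, I would treat the two factors separately. For the linear part, $\langle Av,w\rangle=\langle v,A^{\mathrm T}w\rangle$ yields $(AV)^\perp=(A^{\mathrm T})^{-1}V^\perp$; for the Frobenius part, $\langle v^\rho,w^\rho\rangle=\langle v,w\rangle^\rho$ vanishes if and only if $\langle v,w\rangle$ does, so $(U^\rho)^\perp=(U^\perp)^\rho$. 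Combining, $(\varphi(U))^\perp=(A^{\mathrm T})^{-1}(U^\perp)^\rho=\psi(U^\perp)$ with $\psi(v)=(A^{\mathrm T})^{-1}v^\rho\in\GaL(2,q^n)$. Assembling everything: if $f,g$ are equivalent then $U_g=\varphi(U_f)$ for some $\varphi$, whence $(U_g)^\perp=\psi((U_f)^\perp)$, and conjugating by $\tau$ gives $U_{g^\top}=\tau((U_g)^\perp)=(\tau\psi\tau^{-1})(U_{f^\top})$ with $\tau\psi\tau^{-1}\in\GaL(2,q^n)$, so $f^\top$ and $g^\top$ are equivalent.

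The main obstacle I expect is the Frobenius part of the equivariance lemma: one must verify that the field-automorphism factor of a semilinear map commutes with $\perp$. The delicate point is that $\rho$ need not fix $\Fq$ pointwise, so $\langle v^\rho,w^\rho\rangle=\langle v,w\rangle^\rho$ is in general not equal to $\langle v,w\rangle$; what saves the argument is that $\rho$ preserves $\Fq$ setwise and that orthogonality records only the vanishing of $\langle\,\cdot\,,\cdot\,\rangle$, which is $\rho$-invariant. The linear part is the routine contragredient (transpose-inverse) computation.
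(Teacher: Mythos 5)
Your argument is correct. Note, however, that the paper offers no proof of this proposition to compare against: it is quoted from a private communication (\cite{unpub}), so yours is a self-contained substitute rather than a variant of the authors' argument. The three ingredients all check out: the identity $(U_f)^\perp=\{(-f^\top(v),v)\colon v\in\Fqn\}$ follows exactly as you say from nondegeneracy of $\Tr_{q^n/q}(xy)$ and the defining property of the adjoint, so that $\tau\bigl((U_f)^\perp\bigr)=U_{f^\top}$ with $\tau(a,b)=(b,-a)\in\GL(2,q^n)$; the contragredient computation $(AV)^\perp=(A^{\mathrm T})^{-1}V^\perp$ is routine; and the Frobenius step is sound because $\Tr_{q^n/q}(x^{p^k})=\Tr_{q^n/q}(x)^{p^k}$ (the Galois group is abelian), so $\langle v^\rho,w^\rho\rangle=\langle v,w\rangle^\rho$ and orthogonality is preserved even though $\rho$ need not fix $\Fq$ pointwise --- the one delicate point, which you correctly isolate. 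The conclusion $U_{g^\top}=(\tau\psi\tau^{-1})(U_{f^\top})$ with $\tau\psi\tau^{-1}\in\GaL(2,q^n)$ then gives the equivalence. This is essentially the standard duality argument relating adjunction of $q$-polynomials to orthogonal complementation of their graphs (the same mechanism behind the fact, recalled in the paper from \cite{CsMaPo}, that $L_f=L_{f^\top}$), and it would serve as a complete proof of the proposition.
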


The main result of this paper is a summary of the propositions of this section.

\begin{thm}\label{t:main}
  Let $q$ be odd and $t>4$. If $t$ is even and $q>3$ or $t$ is odd and $q>5$, then there exists $m\in\Fqt$ such that \[\pol =X^{{q}^{t-1}}+X^{{q}^{2t-1}}+m(X^{q}-X^{{q}^{t+1}}) \]
  is a scattered $q$-polynomial that is not equivalent to any previously known scattered $q$-polynomial in $\F_{q^{2t}}[X]$.
\end{thm}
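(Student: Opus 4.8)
The plan is to assemble Theorem~\ref{t:main} directly from the structural results established earlier in this section, treating it as a bookkeeping exercise that combines existence with non-equivalence. First I would invoke Corollary~\ref{c:two} together with Proposition~\ref{p:powers} to guarantee the existence of a suitable $m$. Under the hypotheses $t>4$ and the stated parity/size constraints on $q$, the counting argument shows that $\Fqts$ contains elements that are neither $(q-1)$-th powers nor $(q+1)$-th powers of elements of $W$; by Theorem~\ref{thm:scattpoly} each such $m$ yields a scattered $\pol$. So I would fix such an $m$, and moreover \emph{refine} the choice: since I will need $\N_{q^t/q}(m)\ne1$ to separate $\pol$ from $\varphi_{1,\sigma}$ via Proposition~\ref{prop:equivh=1}, I must confirm that the set of scattered-producing $m$ is not entirely contained in the norm-one hypersurface. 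This is the first place where a small amount of care is required: I would check that the number of valid $m$ exceeds the number of norm-one elements $(q^t-1)/(q-1)$, which is exactly where the sharper bounds (e.g. $q>5$ for $t$ odd) come into play.

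Next I would run through the three known families in turn for the selected $m$. Non-equivalence with pseudoregulus type $(i)$ is immediate from Proposition~\ref{propeq:pseudoregulus}, which rests on the right-idealizer computation in Theorem~\ref{t:stab}. Non-equivalence with Lunardon--Polverino type $(ii)$ is handled by Proposition~\ref{propeq:LP} under the standing hypothesis $t>4$. For family $\trea$ (the sub-family with $h\in\Fqt$, to which $\varphi_{1,\sigma}$ belongs), Proposition~\ref{prop:equivh=1} shows that equivalence to $\varphi_{1,\sigma}$ forces $\N_{q^t/q}(m)=1$; having arranged $\N_{q^t/q}(m)\ne1$, this case is excluded. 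Here I should be slightly careful, because $\trea$ is a whole family parametrized by $h\in\Fqt$, not just the single polynomial $\varphi_{1,\sigma}$; I would argue that within equivalence it suffices to compare against the normalized representative, or else note that the same matrix-identity computation generalizes, so that equivalence to any member of $\trea$ again imposes a norm-type constraint violated by our choice of $m$.

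The remaining family $\treb$ (where $h\notin\Fqt$) is where I expect the real content to lie, and I would route around a direct case-by-case analysis using the adjoint trick. By Proposition~\ref{p:class}, for any scattered $\pol$ the adjoint $\varphi_{m,q}^\top=\phi_{m^{q^{t-1}}}$ is \emph{not} equivalent to any $\varphi_{k,q}$ with $k\in\Fqt$; in other words $\pol$ cannot be equivalent to its own adjoint and does not sit inside $\trea$ in an adjoint-symmetric way. The key leverage is Proposition~\ref{prop:unpub}: equivalence is preserved under taking adjoints. Since every polynomial in $\treb$ is equivalent to its own adjoint (as recalled in the introduction, via \cite[Theorem~4.6]{LMTZ} and \cite[Proposition~4.17]{NSZ}), if $\pol$ were equivalent to some $\psi\in\treb$ then $\varphi_{m,q}^\top$ would be equivalent to $\psi^\top\sim\psi\sim\pol$, forcing $\pol\sim\varphi_{m,q}^\top$. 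But Proposition~\ref{p:class} precisely rules out $\pol$ being equivalent to an element of the shape $\phi_\mu=\varphi_{m,q}^\top$ while scattered. I would spell out this short chain of implications to contradict membership in $\treb$.

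Collecting the four exclusions, the chosen $\pol$ is scattered and inequivalent to every polynomial in families $(i)$, $(ii)$, $\trea$, and $\treb$, hence to every previously known scattered $q$-polynomial in $\F_{q^{2t}}[X]$, which is the assertion. The main obstacle I anticipate is not any single non-equivalence statement---those are delegated to the propositions---but rather the simultaneous satisfaction of all the arithmetic side-conditions by one value of $m$: namely that $m$ avoids both power-sets of $W$ (for scatteredness) \emph{and} has $\N_{q^t/q}(m)\ne1$ (to kill the $\trea$ comparison). I would therefore devote the bulk of the argument to a clean counting estimate showing these excluded sets do not cover all of $\Fqts$ under the stated hypotheses on $t$ and $q$, which is exactly why the theorem carries the thresholds $t>4$, and $q>3$ (resp.\ $q>5$) in the even (resp.\ odd) case.
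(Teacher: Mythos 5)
Your proposal is correct and follows essentially the same route as the paper's proof: existence of a suitable $m$ via the counting bound on $|S_{q-1}\cup S_{q+1}\cup T|$, exclusion of families $(i)$ and $(ii)$ by Propositions~\ref{propeq:pseudoregulus} and \ref{propeq:LP}, exclusion of $\trea$ by Proposition~\ref{prop:equivh=1} under $\N_{q^t/q}(m)\ne1$, and exclusion of $\treb$ by the same adjoint chain combining Propositions~\ref{p:class} and \ref{prop:unpub} with the self-adjointness of that family. The only addition is your (reasonable) caveat that $\trea$ is a family parametrized by $h$ rather than the single representative $\varphi_{1,\sigma}$, a point the paper treats with the same brevity.
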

\begin{proof}
  Take into account a scattered $\pol$.
  This $\pol$ does not belong to the families $(i)$ and $(ii)$  by Propositions~\ref{propeq:pseudoregulus} and \ref{propeq:LP}.
  Since any element of the family $\treb$ is equivalent to its adjoint, by Propositions~\ref{p:class} and \ref{prop:unpub} such family does not contain $\pol$.
    The family $\trea$ contains elements of type $\varphi_{1,\sigma}$ and if $\N_{q^t/q^{\gcd(2,t)}}(m)\neq1$,
   $\pol$ is nonequivalent to these by Proposition~\ref{prop:equivh=1}.
So it remains to prove that at least one scattered $\pol$ exists satisfying that condition.
    Taking into account Theorem~\ref{thm:scattpoly},
    it is enough to show that the sum of the cardinalities of the following three sets is less than $q^t-1$.
    \begin{enumerate}
        \item $S_{q-1}$, that is, the set of elements in $\F_{q^t}^*$ which are $(q-1)$-powers of elements of $W$;
        \item $S_{q+1}$, that is, the set of elements in $\F_{q^t}^*$ which are $(q+1)$-powers of elements of $W$;
        \item $T$, the set of elements $m\in\Fqt$ such that $\N_{q^t/q}(m)=1$.
    \end{enumerate}
    The equation $\N_{q^{t}/q}(x)=1$ has $(q^t-1)/(q-1)$ solutions.

    Assume that $t$ is even.
    Combining with Proposition~\ref{p:powers},
    \begin{equation}\label{e:206a}
    |S_{q-1}\cup S_{q+1}\cup T|\le\frac{q^t-1}{q-1}+\frac{q^t-1}{q+1}+\frac{q^t-1}{q-1}=\frac{(q^t-1)(3q+1)}{q^2-1},
    \end{equation}
    that for $q>3$ is less than $q^t-1$.

    In the case that $t$ is odd one obtains similarly
    \[
    |S_{q-1}\cup S_{q+1}\cup T|\le\frac{q^t-1}{q-1}+\frac{q^t-1}{2}+\frac{q^t-1}{q-1}=\frac{q^t-1}{2(q-1)}(q+3).
    \]
    For $q>5$ this is less than $q^t-1$.
\end{proof}

\section*{Acknowledgements}
The authors are very thankful to Giovanni Longobardi for pointing out Proposition \ref{prop:unpub} and to the anonymous referees for their comments.
The authors were partially supported by the Italian National Group for Algebraic and Geometric Structures and their Applications (GNSAGA - INdAM). The research of the first and the third author was supported by the INdAM - GNSAGA Project \emph{Tensors over finite fields and their applications}, number E53C23001670001. The research of the third author was supported by the project ``VALERE: VAnviteLli pEr la RicErca" of the University of Campania ``Luigi Vanvitelli'' and by the project COMBINE from ``VALERE: VAnviteLli pEr la RicErca" of the University of Campania ``Luigi Vanvitelli''.

Valentino Smaldore (ORCID: 0000-0003-3433-3164) and Corrado Zanella (ORCID: 0000-0002-5031-1961)\\
Dipartimento di Tecnica e Gestione dei Sistemi Industriali\\
Universit\`a degli Studi di Padova\\
Stradella S. Nicola, 3\\
36100 Vicenza VI - Italy\\
{\em \{valentino.smaldore,corrado.zanella\}@unipd.it}

\smallskip

Ferdinando Zullo (ORCID: 0000-0002-5087-2363)\\
Dipartimento di Matematica e Fisica,\\
Universit\`a degli Studi della Campania ``Luigi Vanvitelli'',\\
Viale Lincoln 5,\\
81100 Caserta CE - Italy\\
{{\em ferdinando.zullo@unicampania.it}}

\end{document}